\newtheorem{theorem}{Theorem}[section]
\newtheorem{lemma}[theorem]{Lemma}
\newtheorem{cor}[theorem]{Corollary}
\newtheorem{definition}[theorem]{Definition}
\newtheorem{example}[theorem]{Example}
\newtheorem{remark}[theorem]{Remark}
\numberwithin{equation}{section}
\begin{document}
\title[fixed point problems]
{ The Problem of Split Equality Fixed-Point and its Applications   }
\date{}
\author[L.B. Mohammed, et al ] { $^{1}$Lawan Bulama Mohammed, $^{2}$Adem Kilicman }
\address{$^{1}$Department of Mathematics,  Faculty of Science, 
	\newline \indent \hspace{2mm}Federal  University Dutse, PMB 7156, Dutse, Jigawa State, Nigeria}
\email{ $^{1}$lawanbulama@gmail.com}
 \address{$^{2}$ Institute for Mathematics Research, 
 	\newline \indent \hspace{2mm}Universiti Putra Malaysia, 43400 Serdang, Selangor, Malaysia}
 \email{ akilic@upm.edu.my}

 \keywords {Fixed Point Problem; Iterative algorithm;  quasi-pseudocontractive mapping;  Weak and strong Convergences.
\\\textbf{2000 MSC:}  47H09, 47H10, 47J25}
\begin{abstract} 
It is generally known that in order to solve the split equality fixed-point problem (SEFPP), it is necessary to compute the norm of bounded and linear operators, which is a challenging task in real life, to address this issue, we studied the SEFPP involving the class of quasi-pseudocontractive mappings in Hilbert spaces and constructed novel algorithms in this regards, and  we proved the algorithms' convergence both with and without prior knowledge of the operator  norm for bounded and linear mappings. Additionally, we gave applications and numerical examples of our findings. A variety of well-known discoveries revealed in the literature are generalized by the findings presented in this work.
\end{abstract} 
\maketitle
\section{Introduction}
  In this manuscript, we consistently utilize the following notations: $\left\langle .,.\right\rangle$ denotes an inner product, and $\left\|.\right\|$ represents its corresponding norm. We designate $\mathbb{H}{j}, j=1,2,3,$ as Hilbert spaces, $\mathbb{K}{j}$ as nonempty, convex, and closed subsets of $\mathbb{H}{j},$ and $\mathbb{D}{j}: \mathbb{H}{j}\to\mathbb{ H}{j}, j=1,2,$ as bounded and linear mappings. The symbols $\rightharpoonup$ and $\to$ signify weak and strong convergences, respectively.

 A mapping $\mathbb{T}:\mathbb{H}_{1}\to \mathbb{H}_{1}$ is known as  a fixed point of $\mathbb{T}$ \big(Fix($\mathbb {T}$)\big) if $\mathbb{T}p=p.$ We denote the set of   Fix($\mathbb{T}$) by  $\{p\in Fix(\mathbb{T}): \mathbb{T}p=p\}$. $\mathbb{T}$ is known to be	quasi nonexpansive, if 
 $\|\mathbb{T}y-p\|\leq \|y-p\|, \forall y\in \mathbb{H}_{1} \rm{~and~} p\in Fix(\mathbb{T}).$ It is obvious that if $\mathbb{T}$ is quasi nonexpansive, then $ \left\|\mathbb{T}y-y\right\|\leq2\left\langle y-\mathbb{T}y, y-p\right\rangle.$ 
 $\mathbb{T}$ is called  demicontractive, if 
 	$\|p-\mathbb{T}y\|^{2} \leq$ $\|p-y\|^{2}$ $+\beta\|y-\mathbb{T}y\|^{2},$ $\forall y\in \mathbb{H}_{1},  p\in Fix(\mathbb{T}) \rm{~and~}\beta\in [0, 1),$  and it is called quasi-pseudocontractive  if $\beta=1.$ 
 $\mathbb{T}$ is known as $\beta-$strongly monotone if
 	$\left\langle \mathbb{T}y-\mathbb{T}p, y-p\right\rangle\geq \beta \|\mathbb{T}y-\mathbb{T}p\|^{2},\forall y,p\in \mathbb{H}_{1}, \beta > 0.$

\begin{remark}
	The   quasi-pseudocontractive mappings encompasses various types, including quasi-nonexpansive, demicontractive, and several others. For further details, please refer to \cite{a} and the cited references therein.
\end{remark}

The problem of finding 
\begin{equation}\label{eqn1}
	p^{*}\in\mathbb{K}_{1} {\rm~such~ that~}  \mathbb{D}_{1}p^{*}\in \mathbb{K}_{2}
	\end{equation} 
is known as  "Split Feasibility Problem (SFP)". The SFP, initially introduced by \cite{1}, has garnered significant attention from researchers due to its versatile applications in practical domains. These applications span a wide spectrum, encompassing fields such as signal processing, intensity-modulated radiation therapy, and image reconstruction, as documented in \cite{5,6,7}.

If the problem stated in (\ref{eqn1}) possesses a solution, it is evident that $p^{*}\in\mathbb{K}_{1}$ is a solution to equation (\ref{eqn1}) if and only if it also solves

\begin{equation}p^{*}=P_{\mathbb{K}_{1}}\Big(I-\lambda\mathbb{D}_{1}^{*}
(I-P_{\mathbb{K}_{2}})\mathbb{D}_{1}\Big)p^{*}, \forall	p^{*}\in\mathbb{K}_{1},
\end{equation}
where $\lambda >0,$ $P_{\mathbb{K}_{j}}$ are metric projection on  ${K}_{j}, j=1,2,$ respectively.
To address the problem (\ref{eqn1}), Byrne \cite{10} proposed the CQ-algorithm through the following introduced algorithm:

\begin{equation}\label{b1}
	x_{n+1}=P_{\mathbb{K}_{1}}\Big(I-\lambda\mathbb{D}_{1}^
	{*}(I-P_{\mathbb{K}_{2}})\mathbb{D}_{1}\Big)x_{n},  
\end{equation}
where $\lambda \in (0, \frac{2}{\mathbb{D}_{1}\mathbb{D}_{1}^{*}}).$  Algorithm (\ref{b1}) involves the computation of $P_{\mathbb{K}_{j}}$ onto  ${K}_{j}, j=1,2,$ and  this is known to be implementable when the projections  
have closed-form expressions.

Related to SFP, we have the "Split Equality Problem (SEP)." This problem was introduced by Moudafi and  Al-Shemas \cite{moudafi} and it entails as finding
\begin{equation}
	~ p^{*}\in \mathbb{K}_{1}~{\rm  and~} q^{*}\in \mathbb{K}_{2}~ ~{\rm ~such~ that~} \mathbb{D}_{1}p^{*}=\mathbb{D}_{2}q^{*}.\label{eqn21}
\end{equation}
By using $\mathbb{D}_{2}=I$ (identity mapping), it is easy to see that SEP reduces to SFP. The following algorithm was taken into consideration by  Moudafi and  Al-Shemas\cite{moudafi}  to solve SEP: 
\begin{equation}
	{} \left\{\begin{array}{ll}x_{n+1}=P_{\mathbb{K}_{1}}\Big(x_{n}-\lambda_{n}\mathbb{D}_{1}^{*}(\mathbb{D}_{1}x_{n}-\mathbb{D}_{2}y_{n})\Big), n\geq 0;
		\\ y_{n+1}=P_{\mathbb{K}_{2}}\Big(y_{n}
		+\lambda_{n}\mathbb{D}_{2}^{*}(\mathbb{D}_{1}x_{n}
		-\mathbb{D}_{2}y_{n})\Big), n\geq 0;\label{eqn3we} 
	\end{array}\right.
\end{equation} 
where         $(x_{0},y_{0})\in \mathbb{H}_{1}\times \mathbb{H}_{2}$ were chosen arbitrary, $P_{\mathbb{K}_{j}}, j=1,2,$ are metric projections on $K_{j}s.$   After some certain condition imposed on $\lambda_{n},$  Moudafi and  Al-Shemas \cite{moudafi}  obtained a weak convergence result.

As any nonempty, closed, and convex subset of a Hilbert space can be viewed as the fixed point set of its corresponding projector, equation (\ref{eqn21}) is consequently simplified to 
\begin{equation}
	\rm{find}	~ p^{*}\in Fix(\mathbb{T}_{1})~{\rm and~} q^{*}\in Fix(\mathbb{T}_{1})~ \ni \mathbb{D}_{1}p^{*}=\mathbb{D}_{2}q^{*},\label{eqn31}
\end{equation}
where $\mathbb{T}_{j}:\mathbb{K}_{j}\to H_{j},j=1,2,$ are  nonlinear operators with $Fix(\mathbb{T}_{j})\neq\emptyset.$  Equation (\ref{eqn31}) is regarded as "Split Equality Fixed Point Problem (SEFPP)
".

Motivated by the result in \cite{moudafi},   Moudafi  \cite{mou}  considered the algorithm below:
\begin{equation}
	{} \left\{\begin{array}{ll}x_{n+1}=\mathbb{T}_{1}\Big(x_{n}-\lambda_{n}\mathbb{D}_{1}^{*}(\mathbb{D}_{1}x_{n}-\mathbb{D}_{2}y_{n})\Big), n\geq 0;
		\\ y_{n+1}=\mathbb{T}_{2}\Big(y_{n}
		+\lambda_{n}\mathbb{D}_{2}^{*}(\mathbb{D}_{1}x_{n}
		-\mathbb{D}_{2}y_{n})\Big), n\geq 0;\label{a1} 
	\end{array}\right.
\end{equation}
after some   conditions imposed on the parameters and operators involved, they proved  weak convergence results.

To solve (\ref{a1}), the  inverse of   bounded and linear operator must be computed and this is known to be none easy task, this was why  Byne  \cite{by} considered an algorithm for solving     SFP without the said inverse.  

It was reported in \cite{b} that the SFP can be reduced to convex feasibility problem (CFP) as 
well as fixed point problem (FPP).  Fixed point theory (FPT) can be     considered
a core area of research of nonlinear analysis, this is due to its various   applications  
in many area of researches such as in image processing, equilibrium
problems,  the study of existence and uniqueness of solutions for the integral and
differential equations,  selection and matching problems see \cite{LBM}.

 Moudafi's algorithm in \cite{mou} involved the firmly quasi-nonexpansive mappings;
this mapping  includes  the class of quasi-nonexpansive. Very recently, Che and Li \cite{LI}, considered the following  algorithm for finding the solution of split equality problems (SEP) and proved the convergence results of the algorithm:

\begin{equation}
	{} \left\{\begin{array}{ll}x_{n+1}=\beta_{n}u_{n}+(1-\beta_{n})\mathbb{T}_{1}u_{n};
		\\u_{n}=x_{n}-\lambda_{n}\mathbb{D}_{1}^{*}(\mathbb{D}_{1}x_{n}-\mathbb{D}_{2}y_{n}, n\geq 0;\\

		\\ y_{n+1}=\beta_{n}v_{n}+(1-\beta_{n})\mathbb{T}_{2}v_{n};
		\\v_{n}= y_{n}
		+\lambda_{n}\mathbb{D}_{2}^{*}(\mathbb{D}_{1}x_{n}
		-\mathbb{D}_{2}y_{n}), n\geq 0;\label{a111} 
	\end{array}\right.
\end{equation}
where $\mathbb{T}_{1}$ and $\mathbb{T}_{2}$ are strictly pseudononspreading mappings.

Since  quasi-pseudocontractive mapping includes;  firmly quasi-nonexpansive, quasi nonexansive, nonexansive, directed  and demicontractive mappings, this motivated Chang et al.\cite{a} to introduce  the following   algorithm  for solving the SEFPP involving quasi-pseudocontractive mappings and proved the weak  convergence result of the   algorithm:

\begin{equation}
	{} \left\{\begin{array}{ll}x_{n+1}=\beta_{n}x_{n}+(1-\beta_{n})\Big((1-\eta)I +\eta \mathbb{T}_{1}((1-\zeta)I+\zeta \mathbb{T}_{1})+\beta_{n}\Big)u_{n};
		\\u_{n}=x_{n}-\lambda_{n}\mathbb{D}_{1}^{*}(\mathbb{D}_{1}x_{n}-\mathbb{D}_{2}y_{n}), n\geq 0.\\

		\\ y_{n+1}=\beta_{n}x_{n}+(1-\beta_{n})\Big((1-\eta)I +\eta \mathbb{T}_{1}((1-\zeta)I+\zeta \mathbb{T}_{2})+\beta_{n}\Big)u_{n};
		\\v_{n}= y_{n}
		+\lambda_{n}\mathbb{D}_{2}^{*}(\mathbb{D}_{1}x_{n}
		-\mathbb{D}_{2}y_{n}), n\geq 0.\label{a111} 
	\end{array}\right.
\end{equation}

Chang et al.'s algorithm \cite{a} necessitates prior operator norm understanding. Boikanyo and Zegeye \cite{z}, building on Chang et al.'s findings, established strong convergence of SEFPP without operator norm prerequisites or semi-compactness conditions.  Similar result had been also being proved in \cite{2}. Very recently, Mohammed et al.  \cite{LBM} proved the convergence of the following algorithm for the class of total quasi-asymptotically nonexpansive mappings:

\begin{equation}
	{} \left\{\begin{array}{ll}x_{n+1}=(1-\beta_{n})u_{n}+\beta_{n}\mathbb{T}^{n}_{1}u_{n};
		\\u_{n}=(1-\gamma_{n})x_{n}+\gamma_{n}\mathbb{T}^{n}_{1}x_{n}-\lambda_{n}\mathbb{D}_{1}^{*}(\mathbb{D}_{1}x_{n}-\mathbb{D}_{2}y_{n}), n\geq 0;\\

		\\ y_{n+1}=(1-\beta_{n})v_{n}+\beta_{n}\mathbb{T}^{n}_{2}v_{n};
		\\v_{n}=(1-\gamma_{n})y_{n}+\gamma_{n}\mathbb{T}^{n}_{1}y _{n}-\lambda_{n}\mathbb{D}_{2}^{*}(\mathbb{D}_{1}x_{n}
		-\mathbb{D}_{2}y_{n}), n\geq 0;\label{a111} 
	\end{array}\right.
\end{equation}

 The findings in \cite{a, 2,LBM}  both converged weakly, in an infinite dimensional space, weak convergent does not imply strong convergent, whereas   strong and weak convergences   coincide if the dimension is finite.
 Based on these findings, we set out to develop new methods for solving SEFPP for quasi-pseudocontractive mappings in Hilbert spaces and to demonstrate how the suggested methods converge. The suggested algorithms' convergence findings will also be presented in a form that frees them from the constraints imposed by the operator norm of bounded and linear operators. At the end, We provided   numerical examples that highlight our findings. 
 
 The following is how the paper is set up: The background of the study is explained in the introduction, some fundamental findings are presented in the preliminary section. The main result was discussed in section three while in section four applications and numerical results were being discussed.

\section{Preliminaries}
 This section offers a few fundamental findings that help support the paper's primary findings.
 \begin{definition}\label{lemma01}
 	A mapping $\mathbb{T}: \mathbb{H}_{1}\to \mathbb{H}_{1}$ is said to semi-compact if for any bounded sequence $\{x_{n}\}\subseteq \mathbb{H}_{1}$ with $\|x_{n}-\mathbb{T}x_{n}\|\to 0,$ then there exist $\{x_{n_{i}}\}\subseteq \{x_{n}\}$ such that $x_{n_{i}}\to x\in \mathbb{H}_{1}.$ \end{definition}

\begin{lemma}\label{lemma1}\cite{2} Suppose $\mathbb{T}: \mathbb{H}_{1}\to \mathbb{H}_{1}$ is Lipschitz  with $L>0,$ and  $\mathbb{U}:= (1-\eta)I +\eta \mathbb{T}((1-\zeta)I+\zeta \mathbb{T}),$ then
\begin{enumerate}
\item[a.] $Fix(\mathbb{T})=Fix((1-\eta)I +\eta \mathbb{T}((1-\zeta)I+\zeta \mathbb{T})=Fix(\mathbb{U});$
\item[b.] $\mathbb{U}$ is demiclosed at zero only if $\mathbb{T}$ is demiclosed at zero.
\item[c.] $\mathbb{U}$ is Lipschitz with $L^{2}$
\item[d.] $\mathbb{U}$ is quasi-nonexpansive only if $\mathbb{T}$ is quasi-pseudocontractive.
\end{enumerate}
\end{lemma}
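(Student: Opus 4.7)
The plan is to work throughout with the factorisation $\mathbb{U} = (1-\eta)I + \eta\mathbb{T}\mathbb{V}$, where $\mathbb{V} := (1-\zeta)I + \zeta\mathbb{T}$, and to order the four parts so that later items can invoke earlier ones.

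For (a), the inclusion $\mathrm{Fix}(\mathbb{T}) \subseteq \mathrm{Fix}(\mathbb{U})$ is immediate: $\mathbb{T}p = p$ forces $\mathbb{V}p = p$ and then $\mathbb{U}p = p$. Conversely, if $\mathbb{U}p = p$ then $\mathbb{T}(\mathbb{V}p) = p$, and since $\mathbb{V}p - p = \zeta(\mathbb{T}p - p)$, the Lipschitz hypothesis yields $\|p - \mathbb{T}p\| = \|\mathbb{T}(\mathbb{V}p) - \mathbb{T}p\| \leq L\zeta\|\mathbb{T}p - p\|$, which forces $\mathbb{T}p = p$ under the standing parameter restriction $L\zeta < 1$ inherent to the construction.

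For (b), taking the stated ``only if'' direction literally, I assume $\mathbb{U}$ is demiclosed at $0$ and pick $x_n \rightharpoonup x$ with $x_n - \mathbb{T}x_n \to 0$. Then $\|\mathbb{V}x_n - x_n\| = \zeta\|\mathbb{T}x_n - x_n\| \to 0$ and $\|\mathbb{T}\mathbb{V}x_n - \mathbb{T}x_n\| \leq L\|\mathbb{V}x_n - x_n\| \to 0$, so $x_n - \mathbb{U}x_n = \eta(x_n - \mathbb{T}\mathbb{V}x_n) \to 0$; demiclosedness of $\mathbb{U}$ at $0$ delivers $\mathbb{U}x = x$, and (a) promotes this to $\mathbb{T}x = x$. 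For (c), the triangle inequality gives $\|\mathbb{U}x - \mathbb{U}y\| \leq [(1-\eta) + \eta L(1-\zeta) + \eta\zeta L^2]\|x-y\|$, and since $L \geq 1$ (automatic for a Lipschitz quasi-pseudocontractive map with a fixed point), each of the three coefficients is dominated by $L^2$, which yields the claim.

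For (d), the direction exactly as worded is ``if $\mathbb{U}$ is quasi-nonexpansive then $\mathbb{T}$ is quasi-pseudocontractive'', and this is the main obstacle. I would first reformulate both conditions by expansion of squared norms: quasi-pseudocontractivity of $\mathbb{T}$ is equivalent to $\langle \mathbb{T}y - y, y - p\rangle \leq 0$ for every $y\in\mathbb{H}_1$ and $p\in\mathrm{Fix}(\mathbb{T})$, while quasi-nonexpansivity of $\mathbb{U}$ reads $\eta\|\mathbb{T}\mathbb{V}y - y\|^2 \leq 2\langle y - \mathbb{T}\mathbb{V}y, y - p\rangle$ on the same fixed-point set (using (a)). The plan is then to split $\mathbb{T}\mathbb{V}y - y = (\mathbb{T}y - y) + (\mathbb{T}\mathbb{V}y - \mathbb{T}y)$, bound the norm of the second summand by $L\zeta\|\mathbb{T}y - y\|$ via the Lipschitz hypothesis, apply Cauchy--Schwarz to the resulting cross inner product, and absorb the small-parameter remainders into $\|\mathbb{T}y - y\|^2$. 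The delicate kernel is precisely this rearrangement: the parameter restrictions that were already needed in (a) and (c) must be strong enough for the sign bookkeeping to collapse the $\mathbb{U}$-inequality into the target relation $\langle \mathbb{T}y - y, y - p\rangle \leq 0$, and identifying the exact admissible range of $\eta,\zeta$ is where I expect the work to concentrate.
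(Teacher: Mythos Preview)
The paper does not prove this lemma at all: it is quoted verbatim from \cite{2} (Chang et al.), so there is no ``paper's own proof'' to compare against. More importantly, the phrase ``only if'' in parts (b) and (d) is a language slip in this paper; the result actually established in \cite{2}, and the only direction ever invoked later (e.g.\ at the start of the proof of Theorem~\ref{T1}, ``noticing that $\mathbb{U}$ is quasi-nonexpansive (see, Lemma~\ref{lemma1})''), is the \emph{converse}: if $\mathbb{T}$ is quasi-pseudocontractive then $\mathbb{U}$ is quasi-nonexpansive, and if $I-\mathbb{T}$ is demiclosed at $0$ then $I-\mathbb{U}$ is demiclosed at $0$. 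You have taken the ``only if'' at face value and are attempting to prove the opposite implication in (d), which is why the bookkeeping refuses to close; that direction is not what the source proves and is not what the paper needs.

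Two further points. In (c) you assert that $L\ge 1$ is ``automatic for a Lipschitz quasi-pseudocontractive map with a fixed point'', but the lemma as stated assumes only that $\mathbb{T}$ is $L$-Lipschitz, not that it is quasi-pseudocontractive, so this is an imported hypothesis; and even with quasi-pseudocontractivity the claim is not obvious. In the source the constant $L\ge 1$ is simply part of the standing hypotheses. Your arguments for (a) and (b) are fine (and (b) happens to go through in both directions once (a) is available), but for (d) you should reverse the logical direction: assume $\mathbb{T}$ is $L$-Lipschitz quasi-pseudocontractive with $0<\eta<\zeta<\frac{1}{1+\sqrt{1+L^{2}}}$, and show $\|\mathbb{U}y-p\|\le\|y-p\|$ by expanding $\|\mathbb{U}y-p\|^{2}$ and using the quasi-pseudocontractive inequality twice (once at $y$, once at $\mathbb{V}y$).
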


\begin{lemma}\label{opial}{\cite{3}} Suppose $\Omega\subseteq\mathbb{H},$ for $\{x_{n}\}\subseteq \Omega,$ then 
\begin{enumerate}
\item[a.]  $\underset{n\to\infty}{\lim}{\|x_{n}-x\|},$ $\forall x\in\Omega$, exist;
\item[b.] For any weak-cluster point of $x_{n}\subseteq \Omega,$   then   $x_{n} \rightharpoonup y,$ $y\in\Omega.$ 
\end{enumerate}
\end{lemma}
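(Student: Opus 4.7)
The statement is the standard Opial-type dichotomy for weak convergence: conditions (a) and (b) together are to be used as hypotheses that force the whole sequence $\{x_n\}$ to weakly converge to some element of $\Omega$. The plan is to (i) deduce boundedness from (a), (ii) extract weak cluster points via reflexivity of $\mathbb{H}$, and (iii) prove those cluster points are unique by a two-subsequence Opial argument.

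First I would observe that fixing any $x\in\Omega$ in (a) makes $\|x_n-x\|$ a convergent real sequence, hence bounded, so $\{x_n\}$ itself is bounded in $\mathbb{H}$. Since Hilbert spaces are reflexive, the Eberlein--\v Smulian theorem guarantees that $\{x_n\}$ has at least one weakly convergent subsequence, i.e.\ the set of weak cluster points is nonempty. By hypothesis (b), any such cluster point lies in $\Omega$, so I may invoke (a) at those points.

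The core step is the uniqueness of the weak cluster point. Suppose, for contradiction, that $y_1,y_2\in\Omega$ are two weak cluster points with $y_1\neq y_2$, realized along subsequences $x_{n_k}\rightharpoonup y_1$ and $x_{m_j}\rightharpoonup y_2$. Since $y_1,y_2\in\Omega$, condition (a) yields that both $\lim_n\|x_n-y_1\|$ and $\lim_n\|x_n-y_2\|$ exist. I would then exploit the identity
\begin{equation*}
\|x_n-y_1\|^2 = \|x_n-y_2\|^2 + 2\langle x_n-y_2,\, y_2-y_1\rangle + \|y_2-y_1\|^2.
\end{equation*}
Passing to the limit along $\{n_k\}$ the inner product term converges to $2\langle y_1-y_2,y_2-y_1\rangle = -2\|y_2-y_1\|^2$, while passing along $\{m_j\}$ the inner product term converges to $0$. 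Since the left-hand limit and the limit of $\|x_n-y_2\|^2$ are independent of the subsequence, equating the two expressions yields $\|y_2-y_1\|^2 = -\|y_2-y_1\|^2$, forcing $y_1=y_2$, a contradiction.

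Thus $\{x_n\}$ has exactly one weak cluster point $y\in\Omega$, and standard subsequence reasoning (every subsequence of the bounded sequence has a further weakly convergent subsequence, whose limit must be $y$) upgrades this to $x_n\rightharpoonup y$. The main obstacle is really the uniqueness step, and it is handled entirely by the cross-term trick above; no extra regularity of $\Omega$ is needed beyond what is already guaranteed by (b).
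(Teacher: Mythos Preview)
Your argument is correct and is essentially the classical proof of this Opial-type lemma. Note, however, that the paper does not supply its own proof of this statement: Lemma~\ref{opial} is simply quoted from the literature (Opial, 1967), so there is no in-paper argument to compare against. Your interpretation of (a) and (b) as hypotheses yielding $x_n\rightharpoonup y\in\Omega$ is the intended one, despite the somewhat ambiguous wording in the paper.

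One minor remark on style: the more common textbook route at the uniqueness step invokes Opial's property directly, i.e., for $x_{n_k}\rightharpoonup y_1$ with $y_1\neq y_2$ one writes
\[
\lim_n\|x_n-y_1\| = \lim_k\|x_{n_k}-y_1\| < \liminf_k\|x_{n_k}-y_2\| = \lim_n\|x_n-y_2\|,
\]
and then reverses the roles of $y_1,y_2$ along $\{m_j\}$ to get the opposite strict inequality. Your polarization-identity computation achieves the same contradiction without appealing to Opial's inequality as a named fact, which is arguably more self-contained; both approaches are equally valid here.
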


\begin{lemma}\label{lemma3}\cite{21} Let  $\{x_{n}\}, \{\gamma_{n}\}\subseteq \mathbb{R}^{+} $ such  that
	$$x_{n+1}\leq (1-\xi_{n})x_{n}+\gamma_{n}, \forall n\geq 0,$$
where $\{\xi_{n}\}\subseteq(0,1),$ then
 \begin{enumerate}
\item[a.]$\underset{n\to\infty}{\lim}\xi_{n}=0 {\rm ~and~} \underset{n}\sum\xi_{n}=\infty;$
\item[b.]$\underset{n\to\infty}{\limsup}\frac{\gamma_{n}}{\xi_{n}}\leq 0 {\rm ~or~} \underset{n}\sum |\gamma_{n}|<\infty,$
then, $\underset{n\to\infty}{\lim}x_{n}=0.$
\end{enumerate}
\end{lemma}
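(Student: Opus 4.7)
The plan is to iterate the one-step recursion $x_{n+1}\leq(1-\xi_n)x_n+\gamma_n$ from a cutoff index $N$, obtaining the explicit bound
\[
x_{n+1} \;\leq\; \Bigl[\prod_{k=N}^{n}(1-\xi_k)\Bigr] x_N \;+\; \sum_{k=N}^{n}\gamma_k \prod_{j=k+1}^{n}(1-\xi_j),
\]
and then controlling each piece separately. For the leading factor I would apply the elementary inequality $1-t\leq e^{-t}$ valid for $t\in[0,1]$ to get
\[
\prod_{k=N}^{n}(1-\xi_k)\;\leq\;\exp\!\Bigl(-\sum_{k=N}^{n}\xi_k\Bigr),
\]
which tends to zero as $n\to\infty$ by the divergence hypothesis $\sum_n\xi_n=\infty$ in part (a). Hence the contribution from the initial datum $x_N$ washes out in the limit regardless of which sub-case of (b) is in force.

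For the sum, I would split into the two alternatives in (b). If $\limsup_n\gamma_n/\xi_n\leq 0$, fix $\epsilon>0$ and choose $N$ so large that $\gamma_n\leq\epsilon\,\xi_n$ for every $n\geq N$. A short induction on the upper limit establishes the telescoping identity
\[
\sum_{k=N}^{n}\xi_k\prod_{j=k+1}^{n}(1-\xi_j)\;=\;1-\prod_{k=N}^{n}(1-\xi_k)\;\leq\;1,
\]
so the sum in the iterated estimate is bounded by $\epsilon$; combined with the decay of the product, this gives $\limsup_{n}x_{n+1}\leq\epsilon$, and letting $\epsilon\downarrow 0$ together with $x_n\geq 0$ forces $x_n\to 0$. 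If instead $\sum_n|\gamma_n|<\infty$, then for any $\epsilon>0$ I choose $N$ with $\sum_{k\geq N}|\gamma_k|<\epsilon$; since each partial product $\prod_{j=k+1}^{n}(1-\xi_j)$ is bounded by $1$, the sum is again at most $\epsilon$, and the same squeeze yields $x_n\to 0$.

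The only nontrivial ingredient is the telescoping identity in the first sub-case; the remainder is routine bookkeeping. The governing insight is that $\sum\xi_n=\infty$ is precisely what is needed to force the error factor $\prod(1-\xi_k)$ to vanish, so that both the initial datum and any eventually negligible perturbation $\gamma_n$ get absorbed in the limit.
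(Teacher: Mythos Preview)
Your argument is correct and is essentially the classical proof of this lemma due to Xu. Note, however, that the paper does not supply its own proof of this statement: it is quoted from \cite{21} and used as a black box, so there is no in-paper proof to compare against. Your iteration of the recursion, the use of $1-t\le e^{-t}$ to kill the product under $\sum_n\xi_n=\infty$, the telescoping identity
\[
\sum_{k=N}^{n}\xi_k\prod_{j=k+1}^{n}(1-\xi_j)=1-\prod_{k=N}^{n}(1-\xi_k)
\]
in the first sub-case, and the trivial bound $\prod(1-\xi_j)\le 1$ in the second, are exactly the ingredients in Xu's original proof, so nothing is missing.
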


\begin{lemma}\label{lemmat}\cite{21} Let  $\{\xi_{n}\},$  $\{\eta_{n}\}\subseteq \mathbb{R}^{+}$  such that $\sum_{n=0}^{\infty}\eta_{n}<\infty.$
If $$\xi_{n+1}\leq (1+\eta_{n})\xi_{n}~\rm{or~} \xi_{n+1}\leq \xi_{n}+\eta_{n}, \forall n\geq 0,$$ then $\underset{n\to\infty}{\lim}\xi_{n}$ exist.
\end{lemma}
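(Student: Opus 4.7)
The plan is to handle each of the two stated inequalities by the same rescaling trick: construct an auxiliary sequence that is manifestly monotone nonincreasing and bounded below, invoke the monotone convergence theorem, and then recover convergence of $\{\xi_n\}$ itself. Summability of $\{\eta_n\}$ will enter precisely to guarantee the lower bound and to ensure that the rescaling factor itself is convergent.

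For the additive case $\xi_{n+1}\le\xi_n+\eta_n$, I would set
\[
a_n \;:=\; \xi_n \;-\; \sum_{k=0}^{n-1}\eta_k,
\]
with the convention that the empty sum is zero. The recurrence immediately yields $a_{n+1}\le a_n$, and the nonnegativity of $\xi_n$ together with $\sum_{k=0}^{\infty}\eta_k<\infty$ gives the uniform lower bound $a_n\ge -\sum_{k=0}^{\infty}\eta_k>-\infty$. Hence $\{a_n\}$ is a monotone sequence bounded below and therefore converges; since $\sum_{k=0}^{n-1}\eta_k$ also converges as $n\to\infty$, the identity $\xi_n = a_n + \sum_{k=0}^{n-1}\eta_k$ displays $\xi_n$ as a sum of two convergent sequences.

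For the multiplicative case $\xi_{n+1}\le(1+\eta_n)\xi_n$, I would instead define
\[
P_n \;:=\; \prod_{k=0}^{n-1}(1+\eta_k), \qquad b_n \;:=\; \frac{\xi_n}{P_n},
\]
with $P_0=1$. Using the elementary bound $1+t\le e^t$ for $t\ge 0$, one obtains $P_n\le\exp\!\bigl(\sum_{k=0}^{\infty}\eta_k\bigr)<\infty$; since $\{P_n\}$ is also nondecreasing, it converges to some $P_\infty\in[1,\infty)$. The hypothesized inequality then reads $b_{n+1}\le b_n$, while $b_n\ge 0$, so $\{b_n\}$ converges by monotone convergence. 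Consequently $\xi_n = b_nP_n$ is the product of two convergent sequences and converges.

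I do not expect a serious obstacle. The only point that requires minor care is verifying the lower bound on the auxiliary sequence in the additive case, which is precisely where the nonnegativity of $\{\xi_n\}$ and the summability of $\{\eta_n\}$ are both used. The result itself is classical (usually credited to Tan and Xu), and the rescaling strategy sketched above is its standard proof; no deeper machinery is required.
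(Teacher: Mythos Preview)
The paper does not supply its own proof of this lemma; it is quoted from \cite{21} and used as a black box throughout the main results. Your argument is correct and is the standard one: in each case you pass to an auxiliary sequence ($a_n=\xi_n-\sum_{k<n}\eta_k$ in the additive case, $b_n=\xi_n/\prod_{k<n}(1+\eta_k)$ in the multiplicative case) that is nonincreasing and bounded below, apply monotone convergence, and then recover $\lim\xi_n$ by adding back the tail sum or multiplying back the convergent product. Nothing further is needed.
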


\section{Main Results}
\subsection{SEFPP With   Prior   Knowledge  on  Operator Norms}
The sequel will use $\Gamma$ to represent the solution set for (\ref{eqn31}), that is  \begin{equation}
 \Gamma:=\{{\rm  x^{*}\in Fix(\mathbb{T}_{1})~ and~ y^{*}\in Fix(\mathbb{T}_{2})~ such~ that~ \mathbb{D}_{1}x^{*}=\mathbb{D}_{2}y^{*}.}\}\label{1}
\end{equation}  
These presumptions were used to approximation  (\ref{1}). Suppose that

\begin{enumerate}
 	\item [(A1)] $\mathbb{T}_{j}:\mathbb{H}_{j}\to \mathbb{H}_{j},j=1,2,$ are two quasi-pseudocontractive operators with  $Fix(\mathbb{T}_{j})\neq\emptyset,j=1,2,$ in addition, suppose $\mathbb{T}_{1}$ is L- Lipschitz.
 	\item [(A2)]  $\mathbb{D}_{j}:\mathbb{H}_{j}\to \mathbb{H}_{j}, j=1,2$  are   linear and bounded operators  with their adjoints $\mathbb{D}_{1}^{*}$ and $\mathbb{D}_{2}^{*},$ respectively.
 	\item [(A3)] $(\mathbb{T}_{j}-I), j=1,2$  are demiclosed at origin.
 	\item [(A4)] Let $\mathbb{U}$ and $\mathbb{V}$ be defined below: 
 	\begin{equation}
 	{} \left\{\begin{array}{ll} \mathbb{U}=(1-\eta)I +\eta \mathbb{T}_{1}((1-\zeta)I+\zeta \mathbb{T}_{1}),
 	\\\mathbb{V}=(1-\eta)I +\eta \mathbb{T}_{2}((1-\zeta)I+\zeta \mathbb{T}_{2}), & \textrm{ $  $} \label{3} 
 	\end{array}\right.
 	\end{equation}
 	where $0<\eta<\zeta<\frac{1}{1+\sqrt{1+L^{2}}}.$
 	\item [(A5)] \textbf{Algorithm:} Let ($x_{n}, y_{n} )\subseteq \mathbb{H}_{1}\times \mathbb{H}_{2},$  be defined by  
 	\begin{equation}
 	{} \left\{\begin{array}{ll} x_{n+1}=(1-\alpha_{n}) v_{n}+\alpha_{n}\mathbb{U}v_{n};
 		\\v_{n}=(1-\tau_{n})x_{n}+\tau_{n}\mathbb{U} x_{n}+\tau_{n}\mathbb{D}_{1}^{*}(\mathbb{D}_{2}y_{n}-\mathbb{D}_{1}x_{n}), \forall n\geq 0.\\
 	\\ y_{n+1}=(1-\alpha_{n}) w_{n}+\alpha_{n}\mathbb{V}w_{n};
 	\\w_{n}=(1-\tau_{n}) y_{n}+\tau_{n}\mathbb{V} y_{n}+\tau_{n}\mathbb{D}_{2}^{*}(\mathbb{D}_{1}x_{n}-\mathbb{D}_{2}y_{n}), \forall n\geq 0.& \textrm{ $  $} \label{al1} 
 	\end{array}\right.
 	\end{equation}
 where $(x_{0}, y_{0})\in \mathbb{H}_{1}\times \mathbb{H}_{2}$ are chosen arbitrary,	$0<a<\alpha_{n}<1,$ and $\tau_{n}\in\left(0, \frac{2}{L_{1}+L_{2}}\right),$ where   $L_{1}=\mathbb{D}_{1}^{*}\mathbb{D}_{1}$ and $L_{2}=\mathbb{D}_{2}^{*}\mathbb{D}_{2},$ {\rm respectively~},  $\underset{n\to\infty}{\lim}\tau_{n}=0 {\rm ~and~}  \underset{n}\sum\tau_{n}=\infty.$ 
 
  \end{enumerate} 
 
 \begin{theorem}\label{T1} Suppose $(A1)-(A5)$ 
 	are held, and that  $\Gamma\neq\emptyset.$  
 	Then  $\{(x_{n}, y_{n})\}$ 
 	defined by (\ref{al1}) 
 	converges  to  $\Gamma,$ in addition if $\mathbb{T}_{1}$ and $\mathbb{T}_{2}$ are   semi-compact, then  $\{(x_{n}, y_{n})\}$  converges strongly to  $\Gamma.$
 \end{theorem}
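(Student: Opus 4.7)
The plan is to establish a Fejér-type monotonicity for the joint quantity $\Theta_n := \|x_n - x^*\|^2 + \|y_n - y^*\|^2$ for an arbitrary $(x^*, y^*) \in \Gamma$, then extract asymptotic regularity from the resulting telescoping inequality, and finally upgrade to weak (then strong) convergence via the demiclosedness principle (A3), Opial's Lemma~\ref{opial}, and semi-compactness.

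First, I would invoke Lemma~\ref{lemma1} to replace $\mathbb{T}_1,\mathbb{T}_2$ by the auxiliary operators $\mathbb{U},\mathbb{V}$, noting that $Fix(\mathbb{U})=Fix(\mathbb{T}_1)$, $Fix(\mathbb{V})=Fix(\mathbb{T}_2)$, both $\mathbb{U},\mathbb{V}$ are quasi-nonexpansive, Lipschitz with constant $L^2$, and remain demiclosed at zero because $\mathbb{T}_1,\mathbb{T}_2$ are (by (A3)). Thus any inequality derived from quasi-nonexpansivity applies to $\mathbb{U},\mathbb{V}$ at the fixed points $x^*,y^*$. Fix $(x^*,y^*)\in\Gamma$ and note $\mathbb{D}_1 x^* = \mathbb{D}_2 y^*$.

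Next, I would estimate $\|v_n - x^*\|^2$ using the three-term convex/linear combination identity together with $\|\mathbb{U} x_n - x^*\|\le \|x_n-x^*\|$. Writing $v_n - x^* = (x_n - x^*) + \tau_n(\mathbb{U}x_n - x_n) + \tau_n \mathbb{D}_1^*(\mathbb{D}_2 y_n - \mathbb{D}_1 x_n)$ and expanding the square yields, after using $\langle \mathbb{D}_1^*(\mathbb{D}_2 y_n - \mathbb{D}_1 x_n),\, x_n - x^*\rangle = \langle \mathbb{D}_2 y_n - \mathbb{D}_1 x_n,\, \mathbb{D}_1 x_n - \mathbb{D}_1 x^*\rangle$, an inequality of the form
\begin{equation*}
\|v_n-x^*\|^2 \le \|x_n-x^*\|^2 + 2\tau_n\langle \mathbb{D}_2 y_n - \mathbb{D}_1 x_n,\mathbb{D}_1 x_n - \mathbb{D}_1 x^*\rangle + \tau_n^2 L_1\|\mathbb{D}_1 x_n - \mathbb{D}_2 y_n\|^2 + (\text{neg. terms in }\|x_n-\mathbb{U}x_n\|^2).
\end{equation*}
The symmetric calculation for $\|w_n - y^*\|^2$ produces $2\tau_n\langle \mathbb{D}_1 x_n - \mathbb{D}_2 y_n,\mathbb{D}_2 y_n - \mathbb{D}_2 y^*\rangle + \tau_n^2 L_2\|\mathbb{D}_1 x_n-\mathbb{D}_2 y_n\|^2$. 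Adding and using $\mathbb{D}_1 x^* = \mathbb{D}_2 y^*$ collapses the cross terms to $-2\tau_n\|\mathbb{D}_1 x_n - \mathbb{D}_2 y_n\|^2$, giving
\begin{equation*}
\|v_n-x^*\|^2+\|w_n-y^*\|^2 \le \Theta_n - \tau_n\bigl(2-\tau_n(L_1+L_2)\bigr)\|\mathbb{D}_1 x_n - \mathbb{D}_2 y_n\|^2 - \tau_n c_1(\|x_n-\mathbb{U}x_n\|^2+\|y_n-\mathbb{V}y_n\|^2),
\end{equation*}
for some $c_1>0$ coming from the structure of $v_n,w_n$. A final application of the convex-combination identity for the outer step, together with the quasi-nonexpansivity of $\mathbb{U},\mathbb{V}$ at $x^*,y^*$, provides $\|x_{n+1}-x^*\|^2 \le \|v_n-x^*\|^2 - \alpha_n(1-\alpha_n)\|v_n-\mathbb{U}v_n\|^2$, and similarly for $y_{n+1}$. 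Combining, $\Theta_{n+1}\le \Theta_n - \tau_n(2-\tau_n(L_1+L_2))\|\mathbb{D}_1 x_n - \mathbb{D}_2 y_n\|^2 - (\text{non-negative residues})$. The stepsize condition $\tau_n<\tfrac{2}{L_1+L_2}$ makes the bracketed coefficient positive, so $\{\Theta_n\}$ is monotone non-increasing (Lemma~\ref{lemmat}), hence converges, and telescoping yields
\begin{equation*}
\sum_{n=0}^{\infty}\tau_n\|\mathbb{D}_1 x_n - \mathbb{D}_2 y_n\|^2 < \infty,\qquad \sum_{n=0}^\infty \alpha_n(1-\alpha_n)\|v_n-\mathbb{U}v_n\|^2 < \infty,
\end{equation*}
and similarly for the $y$-side. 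Together with $\sum\tau_n = \infty$, $0<a<\alpha_n<1$, and the Lipschitz continuity of $\mathbb{U},\mathbb{V}$, I would deduce $\|\mathbb{D}_1 x_n - \mathbb{D}_2 y_n\|\to 0$, $\|x_n - \mathbb{U} x_n\|\to 0$, and $\|y_n - \mathbb{V} y_n\|\to 0$ (passing from $v_n$ back to $x_n$ uses that $\|v_n - x_n\| \le \tau_n(\|\mathbb{U}x_n - x_n\| + \|\mathbb{D}_1\|\|\mathbb{D}_1 x_n - \mathbb{D}_2 y_n\|)\to 0$).

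For weak convergence, let $(\bar x,\bar y)$ be any weak cluster point of $\{(x_n,y_n)\}$ along a subsequence $(x_{n_k},y_{n_k})$. Demiclosedness at zero of $\mathbb{U}-I$ and $\mathbb{V}-I$ (item (b) of Lemma~\ref{lemma1}) gives $\bar x\in Fix(\mathbb{U})=Fix(\mathbb{T}_1)$ and $\bar y\in Fix(\mathbb{T}_2)$, while weak continuity of $\mathbb{D}_1,\mathbb{D}_2$ together with $\|\mathbb{D}_1 x_n - \mathbb{D}_2 y_n\|\to 0$ forces $\mathbb{D}_1\bar x = \mathbb{D}_2 \bar y$, so $(\bar x,\bar y)\in\Gamma$. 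Since $\Theta_n$ converges for every element of $\Gamma$, Lemma~\ref{opial} (applied on the product space $\mathbb{H}_1\times\mathbb{H}_2$) yields weak convergence of the entire sequence to a point of $\Gamma$. For strong convergence under semi-compactness, $\|x_n-\mathbb{T}_1 x_n\|\to 0$ (easily obtained from $\|x_n-\mathbb{U} x_n\|\to 0$ and the $L^2$-Lipschitz bound using (A4)) gives a strongly convergent subsequence $x_{n_j}\to \bar x$; its weak limit must agree with the weak limit of the whole sequence, and the previously established convergence of $\Theta_n$ upgrades the subsequential strong limit to a full-sequence strong limit, giving $(x_n,y_n)\to(\bar x,\bar y)\in\Gamma$.

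The main obstacle is the bookkeeping in the second paragraph: one has to expand $\|v_n-x^*\|^2$ and $\|w_n-y^*\|^2$ carefully enough that (i) the cross inner products assemble into the clean term $-2\tau_n\|\mathbb{D}_1 x_n-\mathbb{D}_2 y_n\|^2$ using $\mathbb{D}_1 x^*=\mathbb{D}_2 y^*$, and (ii) the $\tau_n^2$ remainders combine to produce exactly $\tau_n^2(L_1+L_2)\|\mathbb{D}_1 x_n-\mathbb{D}_2 y_n\|^2$ so that the stepsize condition $\tau_n<2/(L_1+L_2)$ produces a negative net coefficient; this is what drives the asymptotic regularity. A secondary subtlety is that $\tau_n\to 0$, so the summability $\sum\tau_n\|\mathbb{D}_1 x_n - \mathbb{D}_2 y_n\|^2<\infty$ does not immediately give $\|\mathbb{D}_1 x_n-\mathbb{D}_2 y_n\|\to 0$; one must combine this with boundedness of the sequence and the monotone behavior of $\Theta_n$ (or use $\liminf$ arguments and the Lipschitz propagation of asymptotic regularity through $\mathbb{U},\mathbb{V}$).
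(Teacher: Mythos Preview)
Your proposal follows essentially the same route as the paper: establish Fej\'er monotonicity of $\Gamma_n=\|x_n-p\|^2+\|y_n-q\|^2$ via the quasi-nonexpansivity of $\mathbb{U},\mathbb{V}$ (Lemma~\ref{lemma1}) and the identity $\mathbb{D}_1p=\mathbb{D}_2q$, read off asymptotic regularity from the resulting decrease inequality, then invoke demiclosedness (A3) together with Lemma~\ref{opial} for weak convergence and Definition~\ref{lemma01} for strong convergence under semi-compactness. The paper deals with the $\tau_n\to 0$ subtlety you flag simply by dividing the decrement $\Gamma_n-\Gamma_{n+1}$ by $\tau_n\bigl(2-\tau_n(L_1+L_2)\bigr)$ (respectively by $\tau_n(1-\tau_n)$) and letting $n\to\infty$, rather than by a summability/\,$\liminf$ argument, so your caution there is warranted but the overall strategy is the same.
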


\begin{proof}
Let $(p, q)\in\Gamma,$ noticing that $\mathbb{U}$ is quasi-nonexpansive (see, Lemma \ref{lemma1}),  then  by  (\ref{al1}), we have 
\begin{align}
\left\|x_{n+1}-p\right\|^{2}=&\left\|(1-\alpha_{n})v_{n}+\alpha_{n}\mathbb{U}v_{n}-p\right\|^{2}\nonumber
\\=&\left\|(1-\alpha_{n}) (v_{n}-p)+\alpha_{n}(\mathbb{U}v_{n}-p)\right\|^{2}\nonumber
\\=&(1-\alpha_{n})\left\|v_{n}-p\right\|^{2}+\alpha_{n}\left\|\mathbb{U}v_{n}-p\right\|^{2}\nonumber
-(1-\alpha_{n})\alpha_{n}\left\|\mathbb{U}v_{n}-v_{n}\right\|^{2}
\\\leq&    \left\|v_{n}-p\right\|^{2} {\rm ~and~} \label{A}
\\\left\|v_{n}-p\right\|^{2}=&\left\|(1-\tau_{n}) x_{n}+\tau_{n}\mathbb{U}x_{n}+\tau_{n}\mathbb{D}_{1}^{*}(\mathbb{D}_{2}y_{n}-\mathbb{D}_{1}x_{n})-p\right\|^{2}\nonumber
\\=&\left\|(1-\tau_{n})(x_{n}-p) +\tau_{n}(\mathbb{U}x_{n}-P)+\tau_{n}\mathbb{D}_{1}^{*}(\mathbb{D}_{2}y_{n}-\mathbb{D}_{1}x_{n})\right\|^{2}\nonumber
\\=&\left\|(1-\tau_{n})(x_{n}-p)+\tau_{n}(\mathbb{U}x_{n}-p)\right\|^{2}+\tau_{n}^{2}\left\|\mathbb{D}_{1}^{*}(\mathbb{D}_{2}y_{n}-\mathbb{D}_{1}x_{n})\right\|^{2}\nonumber
\\+& 2\tau_{n}\left\langle (1-\tau_{n})( x_{n}-p)+\tau_{n}(\mathbb{U}x_{n}-p), \mathbb{D}_{1}^{*}(\mathbb{D}_{2}y_{n}-\mathbb{D}_{1}x_{n})\right\rangle\nonumber
\\=&(1-\tau_{n})\left\|x_{n}-p\right\|^{2}+\tau_{n}\left\|\mathbb{U}x_{n}-p\right\|^{2}-(1-\tau_{n})\tau_{n}\left\|\mathbb{U}x_{n}-x_{n}\right\|^{2}\nonumber
\\+&\tau_{n}^{2}\left\|\mathbb{D}_{1}^{*}(\mathbb{D}_{2}y_{n}-\mathbb{D}_{1}x_{n})\right\|^{2}+2\tau_{n}(1-\tau_{n})\left\langle  x_{n}-p, \mathbb{D}_{1}^{*}(\mathbb{D}_{2}y_{n}-\mathbb{D}_{1}x_{n})\right\rangle\nonumber
\\+& 2\tau_{n}^{2}\left\langle  \mathbb{U}x_{n}-x_{n}+x_{n}-p, \mathbb{D}_{1}^{*}(\mathbb{D}_{2}y_{n}-\mathbb{D}_{1}x_{n})\right\rangle\nonumber
\\\leq&\left\|x_{n}-p\right\|^{2}-(1-\tau_{n})\tau_{n}\left\|\mathbb{U}x_{n}-x_{n}\right\|^{2}\nonumber
\\+&\tau_{n}^{2}\left\|\mathbb{D}_{1}^{*}(\mathbb{D}_{2}y_{n}-\mathbb{D}_{1}x_{n})\right\|^{2}
+2\tau_{n}(1-\tau_{n})\left\langle  x_{n}-p, \mathbb{D}_{1}^{*}(\mathbb{D}_{2}y_{n}-\mathbb{D}_{1}x_{n})\right\rangle\nonumber
\\+& 2\tau_{n}^{2}\left\langle  \mathbb{U}x_{n}-x_{n}, \mathbb{D}_{1}^{*}(\mathbb{D}_{2}y_{n}-\mathbb{D}_{1}x_{n})\right\rangle+ 2\tau_{n}^{2}\left\langle   x_{n}-p, \mathbb{D}_{1}^{*}(\mathbb{D}_{2}y_{n}-\mathbb{D}_{1}x_{n})\right\rangle\nonumber
\nonumber
\\\leq&\left\|x_{n}-p\right\|^{2}+\tau_{n}^{2}\left\|\mathbb{D}_{1}^{*}(\mathbb{D}_{2}y_{n}-\mathbb{D}_{1}x_{n})\right\|^{2}-(1-\tau_{n})\tau_{n}\left\|\mathbb{U}x_{n}-x_{n}\right\|^{2}\nonumber
\\+&2\tau_{n}\left\langle  \mathbb{D}_{1}x_{n}-\mathbb{D}_{1}p, \mathbb{D}_{2}y_{n}-\mathbb{D}_{1}x_{n}\right\rangle+2\tau_{n}^{2}\left\langle  \mathbb{U}\mathbb{D}_{1}x_{n}-\mathbb{D}_{1}x_{n}, \mathbb{D}_{2}y_{n}-\mathbb{D}_{1}x_{n}\right\rangle\nonumber
\\\leq & \left\|x_{n}-p\right\|^{2}
+\tau_{n}^{2}\left\|\mathbb{D}_{1}^{*}(\mathbb{D}_{2}y_{n}-\mathbb{D}_{1}x_{n})\right\|^{2}-(1-\tau_{n})\tau_{n}\left\|\mathbb{U}x_{n}-x_{n}\right\|^{2}\nonumber
\\+&2\tau_{n}\left\langle  \mathbb{D}_{1}x_{n}-\mathbb{D}_{1}p, \mathbb{D}_{2}y_{n}-\mathbb{D}_{1}x_{n}\right\rangle-\tau_{n}^{2}\left\|\mathbb{U}\mathbb{D}_{1}x_{n}-\mathbb{D}_{1}x_{n}\right\|^{2}.\label{B} 
\end{align}

By  (\ref{A}) and (\ref{B}), we have 
\begin{align}
\left\|x_{n+1}-p\right\|^{2}\leq&\left\|x_{n}-p\right\|^{2}+\tau_{n}^{2}L_{1}\left\|\mathbb{D}_{2}y_{n}-\mathbb{D}_{1}x_{n}\right\|^{2}-(1-\tau_{n})\tau_{n}\left\|\mathbb{U}x_{n}-x_{n}\right\|^{2}\nonumber
\\-&2\tau_{n}\left\langle  \mathbb{D}_{1}x_{n}-\mathbb{D}_{1}p, \mathbb{D}_{2}y_{n}-\mathbb{D}_{1}x_{n}\right\rangle-\tau_{n}^{2}\left\|\mathbb{U}\mathbb{D}_{1}x_{n}-\mathbb{D}_{1}x_{n}\right\|^{2}. \label{C1} 
\end{align}

Similarly, 
\begin{align}
\left\|y_{n+1}-q\right\|^{2}\leq&\left\|y_{n}-q\right\|^{2}+\tau_{n}^{2}L_{2}\left\|\mathbb{D}_{1}x_{n}-\mathbb{D}_{2}y_{n}\right\|^{2}-(1-\tau_{n})\tau_{n}\left\|\mathbb{V}y_{n}-y_{n}\right\|^{2}\nonumber
\\+&2\tau_{n}
\left\langle  \mathbb{D}_{2}y_{n}-\mathbb{D}_{2}q, \mathbb{D}_{1}x_{n}-\mathbb{D}_{2}y_{n}\right\rangle-\tau_{n}^{2}\left\|\mathbb{V}\mathbb{D}_{2}y_{n}-\mathbb{D}_{2}y_{n}\right\|^{2}. \label{D1} 
\end{align}

 Since  $\mathbb{D}_{1}p=\mathbb{D}_{2}q,$ and coupled with (\ref{C1}) and (\ref{D1}), we have
\begin{align}
\left\|x_{n+1}-p\right\|^{2}+\left\|y_{n+1}-q\right\|^{2}
\leq&\left\|x_{n}-p\right\|^{2}+\left\|y_{n}-q\right\|^{2}\nonumber
\\-&\tau_{n}\big(2-\tau_{n}(L_{1}+L_{2})\big)\left\|\mathbb{D}_{1}x_{n}-\mathbb{D}_{2}y_{n}\right\|^{2}\nonumber
\\-&(1-\tau_{n})\tau_{n}\Big(\left\|\mathbb{U}x_{n}-x_{n}\right\|^{2}
+\left\|\mathbb{V}y_{n}-y_{n}\right\|^{2}\Big)\nonumber
\\-&\tau_{n}^{2}\Big(\left\|\mathbb{U}\mathbb{D}_{1}x_{n}-\mathbb{D}_{1}x_{n}\right\|^{2} +\left\|\mathbb{V}\mathbb{D}_{2}y_{n}-\mathbb{D}_{2}y_{n}\right\|^{2}\Big). \label{C} 
\end{align}

This implies that
\begin{align}
\Gamma_{n+1}\leq&\Gamma_{n}-\tau_{n}\big(2-\tau_{n}(L_{1}+L_{2})\big)\left\|\mathbb{D}_{1}x_{n}-\mathbb{D}_{2}y_{n}\right\|^{2}\nonumber
\\-&(1-\tau_{n})\tau_{n}\Big(\left\|\mathbb{U}x_{n}-x_{n}\right\|^{2}+\left\|\mathbb{V}y_{n}-y_{n}\right\|^{2}\Big)\nonumber
\\-&\tau_{n}^{2}\Big(\left\|\mathbb{U}\mathbb{D}_{1}x_{n}-\mathbb{D}_{1}x_{n}\right\|^{2} +\left\|\mathbb{V}\mathbb{D}_{2}y_{n}-\mathbb{D}_{2}y_{n}\right\|^{2}\Big),
\label{D2} 
\end{align}
~\rm{where}~$\Gamma_{n}:=\left\|x_{n}-p\right\|^{2}+\left\|y_{n}-q\right\|^{2}.$ 
Thus, $\Gamma_{n+1}\leq\Gamma_{n},$ therefore, $\Gamma_{n}$ is  decreasing sequence which  bounded from below by 0, hence, $\Gamma_{n}$ converges, therefore, we have that

\begin{align}
 \left\|\mathbb{D}_{1}x_{n}-\mathbb{D}_{2}y_{n}\right\|^{2}&\leq \frac{\Gamma_{n}-\Gamma_{n+1}}{\tau_{n}\big(2-\tau_{n}(L_{1}+L_{2})\big)}\nonumber
 \\&\leq \frac{2(\Gamma_{n}-\Gamma_{n+1})}{\big(2-\tau_{n}(L_{1}+L_{2})\big)(L_{1}+L_{2})},\nonumber
\end{align}

\begin{align}
	\left\|\mathbb{U}x_{n}-x_{n}\right\|^{2}+\left\|\mathbb{V}y_{n}-y_{n}\right\|^{2}&\leq \frac{\Gamma_{n}-\Gamma_{n+1}}{\tau_{n}\big(1-\tau_{n}\big)}\nonumber
	\\&\leq \frac{2(\Gamma_{n}-\Gamma_{n+1})}{\big(1-\tau_{n}\big)(L_{1}+L_{2}) }\nonumber {\rm~and~}
\end{align}
\begin{align}
\left\|\mathbb{U}\mathbb{D}_{1}x_{n}-\mathbb{D}_{1}x_{n}\right\|^{2} +\left\|\mathbb{V}\mathbb{D}_{2}y_{n}-\mathbb{D}_{2}y_{n}\right\|^{2}&\leq \frac{\Gamma_{n}-\Gamma_{n+1}}{\tau_{n}^{2}}\nonumber
	\\&\leq \frac{2(\Gamma_{n}-\Gamma_{n+1})}{L_{1}+L_{2} }.\nonumber 
\end{align}
These lead to


\begin{align}
\underset{n\to\infty}{\lim}\left\|\mathbb{D}_{1}x_{n}-\mathbb{D}_{2}y_{n}\right\| =0,\label{D3} 
\end{align}

\begin{equation}
{} \begin{array}{ll}  \underset{n\to\infty}{\lim}\left\|\mathbb{U}\mathbb{D}_{1}x_{n}-\mathbb{D}_{1}x_{n}\right\|=0, \underset{n\to\infty}{\lim}\left\|\mathbb{U}x_{n}-x_{n}\right\|=0  {\rm~and~} 

\\ \underset{n\to\infty}{\lim}\left\|\mathbb{V}\mathbb{D}_{2}y_{n}-\mathbb{D}_{2}y_{n}\right\|=0, \underset{n\to\infty}{\lim}\left\|\mathbb{V}y_{n}-y_{n}\right\|=0.& \textrm{ $  $} \label{D4} 
\end{array}.
\end{equation}
Since, $\mathbb{U}= (1-\eta)I- \eta \mathbb{T}_{1}((1-\zeta)I+\zeta \mathbb{T}_{1})I$, where $0<\eta<\zeta<\frac{1}{1+\sqrt{1+L^{2}}}$ and $\mathbb{T}_{1}$ is Lipschitz, we have
 \begin{align}
\|\eta x_{n}-\eta \mathbb{T}_{1}x_{n}\|&=\|x_{n}-(1-\eta)x_{n}- \eta \mathbb{T}_{1}x_{n}\|\nonumber
\\&=\|x_{n}-(1-\eta)x_{n}-\eta \mathbb{T}_{1}((1-\zeta)I+\zeta \mathbb{T}_{1})x_{n}+\eta \mathbb{T}_{1}((1-\zeta)I+\zeta \mathbb{T}_{1})x_{n}- \eta \mathbb{T}_{1}x_{n}\|\nonumber
\\&\leq\|x_{n}-(1-\eta)x_{n}- \eta \mathbb{T}_{1}((1-\zeta)I+\zeta \mathbb{T}_{1})x_{n}\|\nonumber
\\&+\|\eta \mathbb{T}_{1}((1-\zeta)I+\zeta \mathbb{T}_{1})x_{n}-\eta \mathbb{T}_{1}x_{n}\|\nonumber
\\&\leq \|x_{n}-\mathbb{U}x_{n}\|+\eta L\|((1-\zeta)I+\zeta \mathbb{T}_{1})x_{n}- x_{n}\|\nonumber
\\&= \|x_{n}-\mathbb{U}x_{n}\|+\eta \zeta L\|x_{n}- \mathbb{T}_{1}x_{n}\|.\nonumber
\end{align}
Therefore,
 \begin{align}
 \|x_{n}-\mathbb{T}_{1}x_{n}\|&\leq \frac{1}{(1-\zeta L)\eta}\|x_{n}-\mathbb{U}x_{n}\|.
 \end{align}
Similarly, 
 \begin{align}
 \| y_{n}-\mathbb{T}_{2}y_{n}\|&\leq \frac{1}{(1-\zeta L)\eta}\|y_{n}-\mathbb{V}y_{n}\|.
 \end{align}
By (\ref{D4}), we see  that
 \begin{align}\label{ww}
\underset{n\to\infty}{\lim}\left\|\mathbb{T}_{1}x_{n}-x_{n}\right\|=0,~ \rm {and} ~\underset{n\to\infty}{\lim}\left\|\mathbb{T}_{2}y_{n}-y_{n}\right\|=0.  
\end{align}
\subsection{Claim $(x_{n},y_{n})\rightharpoonup (x^{*},y^{*})$ }\label{section}
Since $\{\Gamma_{n}\}$ converges, it follows that $(x_{n},y_{n})$ is bounded, this implies that there exist
 $(x^{*},y^{*})\in\Gamma$ for which  $x_{n}\rightharpoonup x^{*}$ and $y_{n}\rightharpoonup y^{*}.$ 
 
 Now $v_{n}=x_{n}-\tau_{n}(\mathbb{U}x_{n}-x_{n})+\tau_{n}\mathbb{D}_{1}^{*}(\mathbb{D}_{2}y_{n}-\mathbb{D}_{1}x_{n})$ and $w_{n}=y_{n}-\tau_{n}(\mathbb{V}y_{n}-y_{n})+\tau_{n}\mathbb{D}_{2}^{*}(\mathbb{D}_{1}x_{n}-\mathbb{D}_{2}y_{n}),$ couple with (\ref{D3}) and (\ref{D4}), we deduced that
  $v_{n}\rightharpoonup x^{*}$ and $w_{n}\rightharpoonup y^{*}.$

On the other hand,  $x_{n}\rightharpoonup x^{*},$ $v_{n}\rightharpoonup x^{*},$ and $\underset{n\to\infty}{\lim}\left\|\mathbb{D}_{1}x_{n}-\mathbb{D}_{2}y_{n}\right\|=0$ together with $$v_{n}=(1-\tau_{n})x_{n}+\tau_{n}\mathbb{U} x_{n}+\tau_{n}\mathbb{D}_{1}^{*}(\mathbb{D}_{1}x_{n}-\mathbb{D}_{2}y_{n}),$$
we deduced that $x^{*}=\mathbb{U}x^{*}.$  Similarly, $y^{*}=\mathbb{V}y^{*}.$   These imply that 
 $x^{*}=\mathbb{T}_{1}x^{*}$ and $y^{*}=\mathbb{T}_{2}y^{*},$ see Lemma \ref{lemma1}. 

Now that $x_{n}\rightharpoonup x^{*}$ and  $\underset{n\to\infty}{\lim}\left\|\mathbb{T}_{1}x_{n}-x_{n}\right\|=0$ couple with demiclosedness of $(\mathbb{T}_{1}-I)$ at origin, we have  $x^{*}\in Fix(\mathbb{T}_{1}).$  

Similarly, $y_{n}\rightharpoonup y^{*}$ and $\underset{n\to\infty}{\lim}\left\|\mathbb{T}_{2}y_{n}-y_{n}\right\|=0$ together with the demiclosedness of $(\mathbb{T}_{2}-I)$ at origin, we see that  $y^{*}\in Fix(\mathbb{T}_{2}).$   

Since $v_{n}\rightharpoonup x^{*},$  $w_{n}\rightharpoonup y^{*}$ and  $\mathbb{D}_{1}$ and $\mathbb{D}_{2}$ are linear mappings, we have 
$$\mathbb{D}_{1}v_{n}\rightharpoonup \mathbb{D}_{1}x^{*}, {\rm~~and~~} \mathbb{D}_{2}w_{n}\rightharpoonup \mathbb{D}_{2}y^{*}.$$
This implies that 
$$\mathbb{D}_{1}v_{n}-\mathbb{D}_{2}w_{n}\rightharpoonup \mathbb{D}_{1}x^{*}-\mathbb{D}_{2}y^{*},$$
which turn to implies that 
$$\left\|\mathbb{D}_{1}x^{*}-\mathbb{D}_{2}y^{*}\right\|\leq\underset{n\to\infty}{\liminf}\left\|\mathbb{D}_{1}v_{n}-\mathbb{D}_{2}w_{n}\right\|=0.$$
Thus, $\mathbb{D}_{1}x^{*}=\mathbb{D}_{2}y^{*}.$ Noticing that   $(x^{*},y^{*})\in Fix(\mathbb{T}_{1})\times Fix(\mathbb{T}_{2})$, we conclude that $(x^{*},y^{*})\in \Gamma.$ 

Thus, 

\begin{enumerate}
\item[(i)]  $\underset{n\to\infty}{\lim}{\Gamma_{n}}$ exist, for all  $(x^{*}, y^{*})\in\Gamma,$ 
\item[(ii)]  $(x_{n}, y_{n})$ belong to $\Gamma.$
\end{enumerate}
Hence, by Lemma \ref{opial}, we see that  $(x_{n}, y_{n})\rightharpoonup(x^{*}, y^{*})\in\Gamma.$ Furthermore, since  $(x_{n}, y_{n})$ is bounded couple with equation (\ref{ww}) and Definition \ref{lemma01} we deduced that $(x_{n}, y_{n})\to (x^{*}, y^{*})\in\Gamma.$ This  completed the proof.
\end{proof}

\subsection{SEFPP Without   Prior   Knowledge  on  Operator Norms}

This section gives   convergent result of SEFPP without the prior knowledge  on the operator norms of  bounded and linear operators $\mathbb{D}_{1}:\mathbb{H}_{1}\to \mathbb{H}_{2}$ and $\mathbb{D}_{2}:\mathbb{H}_{2}\to \mathbb{H}_{3},$ respectively.
\begin{theorem}\label{T2} Assume  $(A1)-(A4)$ 
are satisfied and that $\Gamma\neq\emptyset.$  
Let  $\{(x_{n}, y_{n})\}$ be 
defined by 

	\begin{equation}
	{} \left\{\begin{array}{ll} \mathbb{U}=(1-\eta)I +\eta \mathbb{T}_{1}((1-\zeta)I+\zeta \mathbb{T}_{1});
		\\x_{n+1}=(1-\alpha_{n}) v_{n}+\alpha_{n}\mathbb{U}v_{n};
		\\v_{n}=(1-\tau_{n})x_{n}+\tau_{n}\mathbb{U} x_{n}+\tau_{n}\mathbb{D}_{1}^{*}(\mathbb{D}_{1}y_{n}-\mathbb{D}_{1}x_{n}), \forall n\geq 0;\\
	\\ \mathbb{V}=(1-\eta)I +\eta \mathbb{T}_{2}((1-\zeta)I+\zeta \mathbb{T}_{2});
	\\y_{n+1}=(1-\alpha_{n}) w_{n}+\alpha_{n}\mathbb{V}w_{n};
	\\w_{n}=(1-\tau_{n}) y_{n}+\tau_{n}\mathbb{V} y_{n}+\tau_{n}\mathbb{D}_{2}^{*}(\mathbb{D}_{1}x_{n}-\mathbb{D}_{2}y_{n}),\forall n\geq 0;& \textrm{ $  $} \label{al2} 
	\end{array}\right.
	\end{equation}
where $(x_{0}, y_{0})\in \mathbb{H}_{1}\times \mathbb{H}_{2}$ are chosen arbitrary and
\begin{enumerate}
	\item[(i)]  $0<a<\alpha_{n}<1;$
	\item[(ii)] $\tau_{n}\subseteq(0, 1),$ such that  $\sum{\tau_{n}}=\infty $ and   $\sum{\tau_{n}^{2}}<\infty;$
\end{enumerate} 
then, $(x_{n}, y_{n})\rightharpoonup(x^{*}, y^{*})\in\Gamma.$ In addition if $\mathbb{T}_{1}$ and $\mathbb{T}_{2}$ are   semi-compact, then  $\{(x_{n}, y_{n})\}\to (x^{*}, y^{*})\in\Gamma.$
\end{theorem}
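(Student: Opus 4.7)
The strategy is to adapt the proof of Theorem~\ref{T1} by replacing the step-size bound $\tau_n < 2/(L_1+L_2)$ with the summability hypothesis $\sum \tau_n^2 < \infty$. For a fixed $(p,q) \in \Gamma$, set $\Gamma_n := \|x_n-p\|^2 + \|y_n-q\|^2$ and rerun the expansions (\ref{A})--(\ref{D2}) verbatim; these use only quasi-nonexpansiveness of $\mathbb{U}, \mathbb{V}$ (Lemma~\ref{lemma1}) and the identity $\mathbb{D}_1 p = \mathbb{D}_2 q$, and they yield the same inequality
\begin{align*}
\Gamma_{n+1} &\leq \Gamma_n - \tau_n\bigl(2 - \tau_n(L_1+L_2)\bigr)\|\mathbb{D}_1 x_n - \mathbb{D}_2 y_n\|^2 \\
&\quad - (1-\tau_n)\tau_n\bigl(\|\mathbb{U}x_n - x_n\|^2 + \|\mathbb{V}y_n - y_n\|^2\bigr),
\end{align*}
with $L_1 = \|\mathbb{D}_1^*\mathbb{D}_1\|$, $L_2 = \|\mathbb{D}_2^*\mathbb{D}_2\|$ appearing now only in the analysis, not in the algorithm.

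The new difficulty is that the coefficient $2 - \tau_n(L_1+L_2)$ may be negative. I would control the offending positive part by a summable multiple of $\Gamma_n$ via the crude bound $\|\mathbb{D}_1 x_n - \mathbb{D}_2 y_n\|^2 \leq 2(L_1+L_2)\Gamma_n$, obtaining
$$\Gamma_{n+1} \leq (1 + C\tau_n^2)\Gamma_n - 2\tau_n\|\mathbb{D}_1 x_n - \mathbb{D}_2 y_n\|^2 - (1-\tau_n)\tau_n\bigl(\|\mathbb{U}x_n - x_n\|^2 + \|\mathbb{V}y_n - y_n\|^2\bigr)$$
for a constant $C > 0$. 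Since $\sum \tau_n^2 < \infty$, Lemma~\ref{lemmat} delivers the existence of $\lim \Gamma_n$ and hence the boundedness of $(x_n, y_n)$. Summing this recursion then forces
$$\sum_n \tau_n \|\mathbb{D}_1 x_n - \mathbb{D}_2 y_n\|^2 < \infty, \qquad \sum_n \tau_n\bigl(\|\mathbb{U}x_n - x_n\|^2 + \|\mathbb{V}y_n - y_n\|^2\bigr) < \infty,$$
and combined with $\sum \tau_n = \infty$ together with the Lipschitz continuity of the operators (which prevents the nonnegative summands from oscillating above their $\liminf$), one obtains $\|\mathbb{D}_1 x_n - \mathbb{D}_2 y_n\| \to 0$, $\|\mathbb{U}x_n - x_n\| \to 0$, and $\|\mathbb{V}y_n - y_n\| \to 0$.

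From here the argument is identical to the end of Theorem~\ref{T1}: the elementary bound $\|\mathbb{T}_j z_n - z_n\| \leq \frac{1}{(1-\zeta L)\eta}\|\mathbb{U}z_n - z_n\|$ transfers the asymptotic regularity from $\mathbb{U}, \mathbb{V}$ to $\mathbb{T}_1, \mathbb{T}_2$; demiclosedness at the origin (hypothesis (A3)) places every weak cluster point of $(x_n, y_n)$ in $Fix(\mathbb{T}_1) \times Fix(\mathbb{T}_2)$; weak lower semicontinuity of $\|\cdot\|$ applied along $\mathbb{D}_1 v_n - \mathbb{D}_2 w_n$ yields $\mathbb{D}_1 x^* = \mathbb{D}_2 y^*$; Lemma~\ref{opial} combined with existence of $\lim \Gamma_n$ for every $(p,q) \in \Gamma$ upgrades weak subsequential convergence to full weak convergence; and the semi-compactness hypothesis then promotes this to strong convergence. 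The main obstacle is the implication $\sum \tau_n a_n < \infty \Rightarrow a_n \to 0$ under only $\sum \tau_n = \infty$, which is false in general and must be secured by a Fejér-type slow-variation estimate on $|a_{n+1} - a_n|$ using the boundedness of $(x_n, y_n)$ and the Lipschitz bounds on $\mathbb{U}, \mathbb{V}, \mathbb{D}_1, \mathbb{D}_2$; this is the essential technical replacement for the closed-form step-size restriction used in Theorem~\ref{T1}.
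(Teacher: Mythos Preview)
Your route differs substantially from the paper's. The paper does \emph{not} revisit the Theorem~\ref{T1} expansion (\ref{A})--(\ref{D2}); instead it rewrites $v_n = x_n - \tau_n k_n$ with the composite residual $k_n := x_n - \mathbb{U}x_n - \mathbb{D}_1^{*}(\mathbb{D}_2 y_n - \mathbb{D}_1 x_n)$ (and symmetrically $r_n$), and its central new estimate is the coercivity-type lower bound
\[
\langle x_n-p,\,k_n\rangle + \langle y_n-q,\,r_n\rangle \;\ge\; \eta\bigl(\|k_n\|^2 + \|r_n\|^2\bigr),\qquad \eta=\tfrac{1}{4\max\{1,\|\mathbb{D}_1\|^2,\|\mathbb{D}_2\|^2\}}.
\]
This yields $\Gamma_{n+1}\le \Gamma_n - \tau_n(2\eta-\tau_n)(\|k_n\|^2+\|r_n\|^2)$, whence $\lim\Gamma_n$ exists and $\sum\tau_n(\|k_n\|^2+\|r_n\|^2)<\infty$. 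The paper then establishes a \emph{second} recursion, $\|k_{n+1}\|^2+\|r_{n+1}\|^2 \le (1+C\tau_n^2)(\|k_n\|^2+\|r_n\|^2)$, so Lemma~\ref{lemmat} gives that $\lim(\|k_n\|^2+\|r_n\|^2)$ itself exists and hence equals its $\liminf$, namely $0$. The individual limits $\|x_n-\mathbb{U}x_n\|\to 0$ and $\|\mathbb{D}_1 x_n-\mathbb{D}_2 y_n\|\to 0$ are recovered only afterward from $k_n,r_n\to 0$ via the quasi-nonexpansive inequality. So the paper's slow-variation is run on the packaged quantity $\|k_n\|^2+\|r_n\|^2$, not on the separate pieces you track.

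Your closure step is precisely where the gap lies. To bound $|a_{n+1}-a_n|$ for $a_n=\|\mathbb{D}_1 x_n-\mathbb{D}_2 y_n\|^2$ (or $\|\mathbb{U}x_n-x_n\|^2$) you need control of $\|x_{n+1}-x_n\|$; but from the algorithm,
\[
x_{n+1}-x_n \;=\; \alpha_n(\mathbb{U}v_n - v_n)\;-\;\tau_n k_n,
\]
and the first summand is only bounded, not $O(\tau_n)$, since $\alpha_n\ge a>0$ and $\|\mathbb{U}v_n-v_n\|$ is not small a priori. Consequently neither $|a_{n+1}-a_n|\le C\tau_n$ nor $\sum|a_{n+1}-a_n|<\infty$ follows from boundedness and Lipschitz data alone, and the implication $\sum\tau_n a_n<\infty,\ \sum\tau_n=\infty \Rightarrow a_n\to 0$ cannot be closed this way (indeed it is false under only $|a_{n+1}-a_n|\to 0$). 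The paper's packaging into $k_n$ is exactly what sidesteps this obstruction: the recursion is placed on $\|k_n\|^2+\|r_n\|^2$ itself, and the increment $\|k_{n+1}-k_n\|$ is estimated via $\|x_{n+1}-x_n\|$ which the paper takes as $\le\tau_n\|k_n\|$. If you want to rescue your line, you would need to retain the term $-\alpha_n(1-\alpha_n)\|\mathbb{U}v_n-v_n\|^2$ dropped at (\ref{A}) and extract $\|\mathbb{U}v_n-v_n\|\to 0$ from it first; even then the passage to $\|\mathbb{D}_1 x_n-\mathbb{D}_2 y_n\|\to 0$ requires more than the Fej\'er-type estimate you indicate.
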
 
\begin{proof}
By   (\ref{A}), we deduced that
\begin{align}
\left\|x_{n+1}-p\right\|^{2}\leq&\left\|v_{n}-p\right\|^{2}\nonumber
\\=&\left\| x_{n}-\tau_{n}\Big(x_{n}- \mathbb{U}x_{n} - \tau_{n}\mathbb{D}_{1}^{*}(\mathbb{D}_{2}y_{n}-\mathbb{D}_{1}x_{n})\Big)-p\right\|^{2}\nonumber
\\=&\left\| x_{n}-\tau_{n}k_{n}-p\right\|^{2}, \rm{~where~} k_{n}= x_{n}-\mathbb{U}x_{n}-\mathbb{D}_{1}^{*}(\mathbb{D}_{2}y_{n}-\mathbb{D}_{1}x_{n})\nonumber
\\=&\left\|x_{n}-p\right\|^{2}-2\tau_{n}\left\langle   x_{n}-p, k_{n}\right\rangle+\tau_{n}^{2}\left\|k_{n}\right\|^{2}, \nonumber
\end{align}
thus, \begin{align}
\left\|x_{n+1}-p\right\|^{2}\leq&\left\|x_{n}-p\right\|^{2}-2\tau_{n}\left\langle   x_{n}-p, k_{n}\right\rangle+\tau_{n}^{2}\left\|k_{n}\right\|^{2}, \rm{~and~}\label{101}
\end{align}
\begin{align}
\left\langle   x_{n}-p, k_{n}\right\rangle=&\left\langle  x_{n}-p, x_{n}-\mathbb{U}x_{n}-\mathbb{D}_{1}^{*}(\mathbb{D}_{2}y_{n}-\mathbb{D}_{1}x_{n})\right\rangle\nonumber
\\=&\left\langle x_{n}-p,  x_{n}-\mathbb{U}x_{n}\right\rangle -\left\langle \mathbb{D}_{1}x_{n}-\mathbb{D}_{1}p, \mathbb{D}_{2}y_{n}-\mathbb{D}_{1}x_{n}\right\rangle\nonumber
\\\geq & \frac{1}{2}\left\|x_{n}-\mathbb{U}x_{n}\right\|^{2}-\left\langle \mathbb{D}_{1}x_{n}-\mathbb{D}_{1}p, \mathbb{D}_{2}y_{n}-\mathbb{D}_{1}x_{n}\right\rangle,\label{102} 
\end{align}
where  $\mathbb{U}$  is quasi-nonexpansive. Similarly, 
\begin{align}
\left\|y_{n+1}-q\right\|^{2}\leq\left\|y_{n}-q\right\|^{2}-2\tau_{n}\left\langle   y_{n}-q, r_{n}\right\rangle
+\tau_{n}^{2}\left\|r
_{n}\right\|^{2},\label{105} 
\end{align}
$\rm{where~} r_{n}= y_{n}-\mathbb{V}y_{n}-\mathbb{D}_{2}^{*}(\mathbb{D}_{1}x_{n}-\mathbb{D}_{2}y_{n}),$ and 
\begin{align}
\left\langle   y_{n}-q, r_{n}\right\rangle
\geq & \frac{1}{2}\left\|y_{n}-\mathbb{V}y_{n}\right\|^{2}-\left\langle \mathbb{D}_{2}y_{n}-\mathbb{D}_{2}q, \mathbb{D}_{1}x_{n}-\mathbb{D}_{2}y_{n}\right\rangle,\label{bn}
\end{align} 
  where $\mathbb{V}$ is quasi-nonexpansive. By (\ref{102}) and (\ref{bn}), and the fact that $\mathbb{D}_{1}p=\mathbb{D}_{2}q,$ we have
\begin{align}
\left\langle x_{n}-p, k_{n}\right\rangle+&\left\langle   y_{n}-q, r_{n}\right\rangle\geq\frac{1}{2}\left\|x_{n}-\mathbb{U}x_{n}\right\|^{2}-\left\langle \mathbb{D}_{1}x_{n}-\mathbb{D}_{1}p, \mathbb{D}_{2}y_{n}-\mathbb{D}_{1}x_{n}\right\rangle \nonumber
\\+&\frac{1}{2}\left\|y_{n}-\mathbb{V}y_{n}\right\|^{2}-\left\langle \mathbb{D}_{2}y_{n}-\mathbb{D}_{2}q, \mathbb{D}_{1}x_{n}-\mathbb{D}_{2}y_{n}\right\rangle\nonumber
\\=&\frac{1}{2}\Big(\left\|x_{n}-\mathbb{U}x_{n}\right\|^{2}+\left\|\mathbb{D}_{1}x_{n}-\mathbb{D}_{2}y_{n}\right\|^{2}\Big)\nonumber
\\+&\frac{1}{2}\Big(\left\|y_{n}-\mathbb{V}y_{n}\right\|^{2}+\left\|\mathbb{D}_{1}x_{n}-\mathbb{D}_{2}y_{n}\right\|^{2}\Big)\nonumber
\\\geq &\frac{1}{2}\Big(\left\|x_{n}-\mathbb{U}x_{n}\right\|^{2}+\frac{1}{\left\|\mathbb{D}_{1}\right\|^{2}}\left\|\mathbb{D}_{1}^{*}(\mathbb{D}_{1}x_{n}-\mathbb{D}_{2}y_{n})\right\|^{2}\Big)\nonumber
\\+&\frac{1}{2}\Big(\left\|y_{n}-\mathbb{V}y_{n}\right\|^{2}+\frac{1}{\left\|\mathbb{D}_{2}\right\|^{2}}\left\|\mathbb{D}_{2}^{*}(\mathbb{D}_{1}x_{n}-\mathbb{D}_{2}y_{n})\right\|^{2}\Big)\nonumber
\\\geq &\frac{1}{2\max\{1,\left\|\mathbb{D}_{1}\right\|^{2}\}}\Big(\left\|x_{n}-\mathbb{U}x_{n}\right\|^{2}+\left\|\mathbb{D}_{1}^{*}(\mathbb{D}_{1}x_{n}-\mathbb{D}_{2}y_{n})\right\|^{2}\Big)
\nonumber
\\+&\frac{1}{{2\max\{1,\left\|\mathbb{D}_{2}\right\|^{2}\}}}\Big(\left\|y_{n}-\mathbb{V}y_{n}\right\|^{2}+\left\|\mathbb{D}_{2}^{*}(\mathbb{D}_{1}x_{n}-\mathbb{D}_{2}y_{n})\right\|^{2}\Big)\nonumber
\\\geq &\frac{1}{4\max\{1,\left\|\mathbb{D}_{1}\right\|^{2},\left\|\mathbb{D}_{2}\right\|^{2}\}}\Big(\Big(\left\|y_{n}-\mathbb{V}y_{n}\right\|
+\left\|\mathbb{D}_{1}^{*}(\mathbb{D}_{1}x_{n}-\mathbb{D}_{2}y_{n})\right\|\Big)^{2}\nonumber
\\+&\Big(\left\|x_{n}-\mathbb{U}x_{n}\right\|^{2}+\left\|\mathbb{D}_{2}^{*}(\mathbb{D}_{1}x_{n}-\mathbb{D}_{2}y_{n})\right\|\Big)^{2}\Big)\nonumber
\\\geq &\eta\Big(\|k_{n}\|^{2}+\|r_{n}\|^{2}\Big),\label{kb}
\end{align}
where	$\eta=\frac{1}{4\max\{1,\left\|\mathbb{D}_{1}\right\|^{2},\left\|\mathbb{D}_{2}\right\|^{2}\}},$ $\|k_{n}\|\leq \left\|y_{n}-\mathbb{V}y_{n}\right\|+\left\|\mathbb{D}_{1}^{*}(\mathbb{D}_{1}x_{n}-\mathbb{D}_{2}y_{n})\right\|$ and $\|r_{n}\|\leq \left\|x_{n}-\mathbb{U}x_{n}\right\|+\left\|\mathbb{D}_{2}^{*}(\mathbb{D}_{1}x_{n}-\mathbb{D}_{2}y_{n})\right\|.$

By (\ref{101}), (\ref{105}) and (\ref{kb}), we have
\begin{align}
\left\|x_{n+1}-p\right\|^{2}+\left\|y_{n+1}-q\right\|^{2}\leq&\left\|x_{n}-p\right\|^{2}+\left\|y_{n}-q\right\|^{2}-2\tau_{n}\left\langle   x_{n}-p, k_{n}\right\rangle\nonumber
\\-&2\tau_{n}\left\langle   y_{n}-q, r_{n}\right\rangle+\tau_{n}^{2}\left\|k_{n}\right\|^{2} +\tau_{n}^{2}\left\|r_{n}\right\|^{2}\nonumber
\\\leq&\left\|x_{n}-p\right\|^{2}+\left\|y_{n}-q\right\|^{2}+\tau_{n}^{2}\Big(\|k_{n}\|^{2}+\|r_{n}\|^{2}\Big)\nonumber
\\-&2\tau_{n}\eta\Big(\|k_{n}\|^{2}+\|r_{n}\|^{2}\Big),\label{yz}
\\=&\left\|x_{n}-p\right\|^{2}+\left\|y_{n}-q\right\|^{2}-2\tau_{n}\big(\eta-\frac{\tau_{n}}{2}\big)\Big(\|k_{n}\|^{2}+\|r_{n}\|^{2}\Big).\nonumber
\end{align}
The fact that $\mathbb{U}$ is \rm{$L^{2}$}-Lipschitzian,  we have
\begin{align}
\left\|k_{n}\right\|=&\left\| x_{n}-\mathbb{U}x_{n}-\mathbb{D}_{1}^{*}(\mathbb{D}_{2}y_{n}-\mathbb{D}_{1}x_{n})-(p-\mathbb{U}p)+\mathbb{D}_{1}^{*}(\mathbb{D}_{2}q-\mathbb{D}_{1}p)\right\|\nonumber 
\\\leq & \left\|  x_{n}-p -\mathbb{U}( x_{n}-P) \right\| +\left\| \mathbb{D}_{1}^{*}(\mathbb{D}_{2}q-\mathbb{D}_{1}p) -\mathbb{D}_{1}^{*}(\mathbb{D}_{2}y_{n}-\mathbb{D}_{1}x_{n})\right\|\nonumber 
\\\leq &(1+L^{2})\left\|  x_{n}-p \right\|+\left\|\mathbb{D}_{1}^{*}\right\|\left\|(\mathbb{D}_{1}x_{n}-\mathbb{D}_{1}p) -(\mathbb{D}_{2}y_{n}-\mathbb{D}_{2}q)\right\|\nonumber 
\\\leq &(1+L^{2})\left\|  x_{n}-p \right\|+\left\|\mathbb{D}_{1}^{*}\right\|\Big(\left\|\mathbb{D}_{1}\right\|\left\|x_{n}-p\right\|+\left\|\mathbb{D}_{2}\right\|\left\|y_{n}-q\right\|\Big)\nonumber 
\\\leq &(1+L^{2})\left\|  x_{n}-p \right\|+\left\|\mathbb{D}_{1}^{*}\right\|\max\Big\{\left\|\mathbb{D}_{1}\right\|,\left\|\mathbb{D}_{2}\right\|\Big\}\Big(\left\|x_{n}-p\right\|+\left\|y_{n}-q\right\|\Big).\nonumber
\end{align}
This gives
\begin{align}
\left\|k_{n}\right\|\leq &(1+L^{2})\left\|  x_{n}-p \right\|+\left\|\mathbb{D}_{1}^{*}\right\|\max\Big\{\left\|\mathbb{D}_{1}\right\|,\left\|\mathbb{D}_{2}\right\|\Big\}\Big(\left\|x_{n}-p\right\|+\left\|y_{n}-q\right\|\Big).\label{as}
\end{align}
Similarly, 
\begin{align}
	\left\|r_{n}\right\|\leq &(1+L^{2})\left\|  y_{n}-q \right\|+\left\|\mathbb{D}_{2}^{*}\right\|\max\Big\{\left\|\mathbb{D}_{1}\right\|,\left\|\mathbb{D}_{2}\right\|\Big\}\Big(\left\|x_{n}-p\right\|+\left\|y_{n}-q\right\|\Big).\label{asa}
\end{align}
By (\ref{as}) and (\ref{asa}), we deduced that
\begin{align}
\left\|k_{n}\right\|+\left\|r_{n}\right\|\leq&\Big(1+L^{2}+\max(\|\mathbb{D}_{1}\|^{2},\|\mathbb{D}_{2 }\|^{2})\Big)\Big\{\left\|x_{n}-p\right\|+\left\|y_{n}-q\right\|\Big\}.
\end{align}
This gives 
\begin{align}
\max(\left\|k_{n}\right\|^{2},\left\|r_{n}\right\|^{2})\leq&\varphi^{2}\Big\{\left\|x_{n}-p\right\|+\left\|y_{n}-q\right\|\Big\}^{2}\nonumber
\\\leq&2\varphi^{2}\Big\{\left\|x_{n}-p\right\|^{2}+\left\|y_{n}-q\right\|^{2}\Big\},\label{xz}
\end{align}
{\rm~where~} $\varphi:=1+L^{2}+\max(\|\mathbb{D}_{1}\|^{2},\|\mathbb{D}_{2}\|^{2}).$

Equation (\ref{yz}) and  (\ref{xz}) gave
\begin{align}
\left\|x_{n+1}-p\right\|^{2}+\left\|y_{n+1}-q\right\|^{2} 
\leq&\left\|x_{n}-p\right\|^{2}+\left\|y_{n}-q\right\|^{2}\nonumber
\\+&2\varphi^{2}\tau_{n}^{2}\Big\{\left\|x_{n}-p\right\|^{2}+\left\|y_{n}-q\right\|^{2}\Big\}.\label{yz1}
\end{align}

\noindent Noticing that $\underset{n}{\sum}{\varphi^{2}\tau_{n}^{2}<\infty},$ by Lemma \ref{lemmat} we deduced that $\underset{n\to\infty}{\lim}\Big\{\left\|x_{n}-p\right\|^{2}+\left\|y_{n}-q\right\|^{2}\Big\}$ exist. Thus, $(x_{n},y_{n})$ is bounded, and it is not difficult  to see that $\Big(\|k_{n}\|^{2}+\|r_{n}\|^{2}\Big)$ is bounded.\\

\noindent Next, we show that  $\underset{n\to\infty}{\lim}\left\|\mathbb{U}x_{n}-x_{n}\right\|=0,$  $\underset{n\to\infty}{\lim}\left\|\mathbb{V}y_{n}-y_{n}\right\|=0$  and  $\underset{n\to\infty}{\lim}\left\|\mathbb{D}_{1}x_{n}-\mathbb{D}_{2}y_{n}\right\|=0.$ \\  
 
\noindent By  (\ref{yz}), we have 
\begin{align}
2\tau_{n}\eta\Big(\|k_{n}\|^{2}+\|r_{n}\|^{2}\Big)
&\leq\Big(\left\|x_{n}-p\right\|^{2}-\left\|x_{n+1}-p\right\|^{2}\Big)+\Big(\left\|y_{n}-q\right\|^{2}-\left\|y_{n+1}-q\right\|^{2}\Big)\nonumber
\\&+\tau_{n}^{2}\Big(\|k_{n}\|^{2}+\|r_{n}\|^{2}\Big),\label{nn1}
\end{align} 
thus,
 \begin{align}
 2\eta\underset{n}{\sum}\tau_{n}\Big(\|k_{n}\|^{2}+\|r_{n}\|^{2}\Big)
 &\leq\underset{n}{\sum}\Big(\left\|x_{n}-p\right\|^{2}-\left\|x_{n+1}-p\right\|^{2}+\left\|y_{n}-q\right\|^{2}-\left\|y_{n+1}-q\right\|^{2}\Big)\nonumber
 \\&+\underset{n=0}{\sum}\tau_{n}^{2}\Big(\|k_{n}\|^{2}+\|r_{n}\|^{2}\Big)\nonumber
 \\&=\left\|x_{0}-p\right\|^{2}-\left\|x_{1}-p\right\|^{2}+\left\|y_{0}-q\right\|^{2}-\left\|y_{1}-q\right\|^{2}\nonumber
\\ &+\left\|x_{1}-p\right\|^{2}-\left\|x_{2}-p\right\|^{2}+\left\|y_{1}-q\right\|^{2}-\left\|y_{2}-q\right\|^{2}\nonumber
\\&+\left\|x_{2}-p\right\|^{2}-\left\|x_{3}-p\right\|^{2}+\left\|y_{2}-q\right\|^{2}-\left\|y_{3}-q\right\|^{2}\nonumber
 \\& .\nonumber
 \\& .\nonumber
 \\& .\nonumber
 \\&+\underset{n=0}{\sum}\tau_{n}^{2}\Big(\|k_{n}\|^{2}+\|r_{n}\|^{2}\Big)\nonumber
 \\&=\left\|x_{0}-p\right\|^{2}+\left\|y_{0}-q\right\|^{2}+\underset{n=0}{\sum}\tau_{n}^{2}\Big(\|k_{n}\|^{2}+\|r_{n}\|^{2}\Big).\label{nn3}
 \end{align}
 Since $\underset{n}{\sum}\tau_{n}=\infty,$ $\underset{n}{\sum}\tau_{n}^{2}<\infty$ and $\Big(\|k_{n}\|^{2}+\|r_{n}\|^{2}\Big)$ is bounded, thus, we deduced from   (\ref{nn3}) that 
 \begin{equation}\underset{n\to \infty}{\limsup}\Big(\|k_{n}\|^{2}+\|r_{n}\|^{2}\Big)=0.\label{nn9}
 \end{equation}
 
 On the other hand, 
 \begin{align}
 \left\|k_{n+1}-k_{n}\right\|
 &\leq  \left\|  x_{n+1}-x_{n} -\mathbb{U}( x_{n+1}-x_{n}) \right\|\nonumber
 \\& +\left\| \mathbb{D}_{1}^{*}(\mathbb{D}_{2}y_{n}-\mathbb{D}_{1}x_{n}) -\mathbb{D}_{1}^{*}(\mathbb{D}_{2}y_{n+1}-\mathbb{D}_{1}x_{n+1})\right\|\nonumber 
 \\&\leq(1+L^{2})\left\|   x_{n+1}-x_{n} \right\|+\left\|\mathbb{D}_{1}^{*}\right\|\left\|( \mathbb{D}_{1}x_{n+1}-\mathbb{D}_{1}x_{n}) -(\mathbb{D}_{2}y_{n+1}-\mathbb{D}_{2}y_{n})\right\|\nonumber 
 \\&\leq(1+L^{2})\left\|   x_{n+1}-x_{n} \right\|+\left\|\mathbb{D}_{1}^{*}\right\|\Big(\left\|\mathbb{D}_{1}\right\|\left\|x_{n+1}-x_{n}\right\|+\left\|\mathbb{D}_{2}\right\|\left\|y_{n+1}-y_{n}\right\|\Big)\nonumber 
 \\&\leq(1+L^{2})\left\|  x_{n+1}-x_{n}\right\|\nonumber
 \\&+\left\|\mathbb{D}_{1}^{*}\right\|\max\Big\{\left\|\mathbb{D}_{1}\right\|,\left\|\mathbb{D}_{2}\right\|\Big\}\Big(\left\| x_{n+1}-x_{n}\right\|+
 \left\|y_{n+1}-y_{n}\right\|\Big),\label{nn4}
 \end{align}
and
 \begin{align}\left\|  x_{n+1}-x_{n} \right\|\leq \left\| \tau_{n} k_{n} \right\|\rm{~~and~~}   \left\|  y_{n+1}-y_{n} \right\|\leq \left\| \tau_{n} r_{n} \right\|.\label{nn5}
  \end{align} 
 By  (\ref{nn4}) and (\ref{nn5}), we deduced that 
  \begin{align}
  \left\|k_{n+1}-k_{n}\right\|&\leq(1+L^{2})\tau_{n} \left\|  k_{n} \right\|\nonumber
  \\&+\left\|\mathbb{D}_{1}^{*}\right\|\max\Big\{\left\|\mathbb{D}_{1}\right\|,\left\|\mathbb{D}_{2}\right\|\Big\}\tau_{n}\Big( \left\|k_{n} \right\|+
   \left\|  r_{n} \right\|\Big).\label{nn66}
  \end{align}
  Similarly,   
  \begin{align}
  \left\|r_{n+1}-r_{n}\right\|&\leq(1+L^{2})
  \tau_{n} \left\|  r_{n} \right\|\nonumber
  \\&+\left\|\mathbb{D}_{2}^{*}\right\|\max\Big\{\left\|\mathbb{D}_{1}\right\|,\left\|\mathbb{D}_{2}\right\|\Big\}
  \tau_{n}\Big( \left\|r_{n}\right\|+
  \left\| k_{n} \right\|\Big).\label{nn67}
  \end{align}
  By (\ref{nn66}) and (\ref{nn67}), we have
  \begin{align}
  \max\Big(\left\|k_{n+1}-k_{n}\right\|,\left\|r_{n+1}-r_{n}\right\|\Big)&\leq(1+L^{2})\tau_{n} \Big(\left\|k_{n} \right\|+\left\| r_{n} \right\|\Big)\nonumber
  \\&+\left\|\mathbb{D}_{1}^{*}\right\|\max\Big\{\left\|\mathbb{D}_{1}\right\|^{2},\left\|\mathbb{D}_{2}\right\|^{2}\Big\}\tau_{n}\Big(\left\|k_{n} \right\|+\left\| r_{n} \right\|\Big)\nonumber
  \\&=\tau_{n} j_{\mathbb{D}_{1},\mathbb{D}_{2}}\Big(\left\|k_{n} \right\|+\left\| r_{n} \right\|\Big), \nonumber
   \end{align}
   where $j_{\mathbb{D}_{1},\mathbb{D}_{2}} =\Big((1+L^{2}) +\left\|\mathbb{D}_{1}^{*}\right\|
   \max\Big\{\left\|\mathbb{D}_{1}\right\|^{2},\left\|\mathbb{D}_{2}\right\|^{2}\Big\}\Big).$
   Thus \begin{align}
   \max\Big(\left\|k_{n+1}-k_{n}\right\| ,\left\|r_{n+1}-r_{n}\right\| \Big)&\leq
   \tau_{n}  j_{\mathbb{D}_{1},\mathbb{D}_{2}} \Big(\left\|k_{n} \right\|+\left\| r_{n} \right\|\Big) . \label{nn7}  
   \end{align}
   
   By   (\ref{nn7}), we deduced that
     \begin{align}
   \left\langle  k_{n}, k_{n+1}- k_{n}\right\rangle+\left\langle  r_{n}, r_{n+1}- r_{n}\right\rangle&\leq \|k_{n}\|\| k_{n+1}- k_{n}\| +\| r_{n}\|\| r_{n+1}- r_{n}\|\nonumber
   \\&\leq \tau_{n}^{2} j_{\mathbb{D}_{1},\mathbb{D}_{2}}^{2}\Big(\left\|k_{n} \right\|+\left\| r_{n} \right\|\Big)^{2}\nonumber
   \\&\leq 2\tau_{n}^{2} j_{\mathbb{D}_{1},\mathbb{D}_{2}}^{2}\Big(\left\|k_{n} \right\|^{2}+\left\| r_{n} \right\|^{2}\Big).\label{nn8}
   \end{align}
    On the other hand, 
   \begin{align}
   \|k_{n+1}\|^{2}+\|r_{n+1}\|^{2}&=\|k_{n}\|^{2}+ \left\langle  k_{n}, k_{n+1}- k_{n}\right\rangle +\|k_{n+1}-k_{n}\|^{2}\nonumber
   \\&+\|r_{n}\|^{2}+\left\langle  r_{n}, r_{n+1}- r_{n}\right\rangle+ \|r_{n+1}- r_{n}\|^{2}\nonumber
   \\&\leq \Big(1+2\tau_{n}^{2} j_{\mathbb{D}_{1},\mathbb{D}_{2}}^{2}+4\tau_{n}^{2} j_{\mathbb{D}_{1},\mathbb{D}_{2}}^{2} \Big)\Big(\left\|k_{n} \right\|^{2}+\left\| r_{n} \right\|^{2}\Big).
   \end{align}
   Noticing that $\underset{n}{\sum}(6\tau_{n}^{2} j_{\mathbb{D}_{1},\mathbb{D}_{2}}^{2})<\infty,$ thus, by Lemma \ref{lemmat}, we deduced that 
    \begin{align}\underset{n\to\infty}{\lim}\Big(\left\|k_{n} \right\|^{2}+\left\| r_{n} \right\|^{2}\Big) ~\rm{exist,~  this~ imply~ that }
      \end{align}
      \begin{align}\underset{n\to\infty}{\lim}(\left\|k_{n} \right\|+\left\| r_{n} \right\|\Big) ~\rm{exist}.\label{nn10}
      \end{align}
     Therefore, we deduced from (\ref{nn9}) and  (\ref{nn10}) that 
      \begin{align}\underset{n\to\infty}{\lim}\left\|k_{n} \right\|&=\underset{n\to\infty}{\lim}\left\|   x_{n}-\mathbb{U}x_{n}-\mathbb{D}_{1}^{*}(\mathbb{D}_{2}y_{n}-\mathbb{D}_{1}x_{n})\right\|=0\nonumber ~  \rm{and}
     \\ \underset{n\to\infty}{\lim}\left\|r_{n} \right\|&=\underset{n\to\infty}{\lim}\left\|   y_{n}-\mathbb{V}y_{n}-\mathbb{D}_{2}^{*}(\mathbb{D}_{1}x_{n}-\mathbb{D}_{2}y_{n})\right\|=0.\label{nn12}
      \end{align}
     It is known that  $\mathbb{U}$ is quasi nonexpansive mapping if and only if
     $\frac{1}{2}\|x_{n}-\mathbb{U}x_{n}\|^{2}\leq \left\langle x_{n}-\mathbb{U}x_{n}, x_{n}-p \right\rangle, p\in Fix(\mathbb{U}).$ This implies that
     \begin{align}
     \frac{1}{2}\|x_{n}-\mathbb{U}x_{n}\|^{2}&+\left\langle \mathbb{D}_{1}x_{n}-\mathbb{D}_{2}y_{n}, \mathbb{D}_{1}x_{n}-\mathbb{D}_{1}p \right\rangle\leq \left\langle x_{n}-\mathbb{U}x_{n}, x_{n}-p \right\rangle\nonumber
     \\&+ \left\langle \mathbb{D}_{1}x_{n}-\mathbb{D}_{2}y_{n}, \mathbb{D}_{1}x_{n}-\mathbb{D}_{1}p \right\rangle\nonumber
     \\&= \left\langle x_{n}-\mathbb{U}x_{n}-\mathbb{D}_{1}^{*}(\mathbb{D}_{1}x_{n}-\mathbb{D}_{2}y_{n}), x_{n}-p \right\rangle\nonumber
     \\&\leq \|x_{n}-\mathbb{U}x_{n}-\mathbb{D}_{1}^{*}(\mathbb{D}_{1}x_{n}-\mathbb{D}_{2}y_{n})\|\|x_{n}-p\|\label{nn13}.
     \end{align}
  Thus, by  (\ref{nn12}) we deduced that 
     \begin{align}
       \underset{n\to\infty}{\lim}\left\|x_{n}-\mathbb{U}x_{n} \right\|=0.\label{nn14}
     \end{align}
   Similarly, 
   \begin{align}
   \underset{n\to\infty}{\lim}\left\|y_{n}-\mathbb{V}y_{n} \right\|=0,\label{nn151}
   \end{align}
and
\begin{align}
\left\|\mathbb{D}_{1}x_{n}-\mathbb{D}_{2}y_{n} \right\|^{2}&=\left\langle \mathbb{D}_{1}x_{n}-\mathbb{D}_{2}y_{n}, \mathbb{D}_{1}x_{n}-\mathbb{D}_{2}y_{n} \right\rangle\nonumber
\\&=\left\langle \mathbb{D}_{1}x_{n}-\mathbb{D}_{2}y_{n}, \mathbb{D}_{1}x_{n}-\mathbb{D}_{1}p \right\rangle
+\left\langle \mathbb{D}_{2}y_{n}-\mathbb{D}_{1}x_{n},\nonumber \mathbb{D}_{2}y_{n}-\mathbb{D}_{2}q \right\rangle,
\\&=\left\langle \mathbb{D}_{1}^{*}(\mathbb{D}_{1}x_{n}-\mathbb{D}_{2}y_{n}), x_{n}-p \right\rangle+\left\langle \mathbb{D}_{2}^{*}(\mathbb{D}_{2}y_{n}-\mathbb{D}_{1}x_{n}), y_{n}-q \right\rangle\nonumber
\\&\leq \|\mathbb{D}_{1}^{*}(\mathbb{D}_{1}x_{n}-\mathbb{D}_{2}y_{n})+(x_{n}-Ux_{n})-(x_{n}-\mathbb{U}x_{n})\|\|x_{n}-p\|\|\nonumber
\\&+\|\mathbb{D}_{2}^{*}(\mathbb{D}_{2}y_{n}-\mathbb{D}_{1}x_{n})+(y_{n}-mathbb{V}y_{n})-(y_{n}-\mathbb{V}y_{n})\|\|y_{n}-q\|\nonumber
\\&\leq\|\mathbb{D}_{1}^{*}(\mathbb{D}_{1}x_{n}-\mathbb{D}_{2}y_{n})-(x_{n}-\mathbb{U}x_{n})\|\|x_{n}-p\|
+\|(x_{n}-\mathbb{U}x_{n})\|\|x_{n}-p\|\nonumber
\\&+\|\mathbb{D}_{2}^{*}(\mathbb{D}_{2}y_{n}-\mathbb{D}_{1}x_{n})-(y_{n}-\mathbb{V}y_{n})\|\|y_{n}-q\|
+\|(y_{n}-Vy_{n})\|\|y_{n}-q\|
   \end{align}
  Thus,  by (\ref{nn14}), (\ref{nn151}) and the fact that $\{\|x_{n}-p\|\}$ and $\{\|y_{n}-q\|\}$ are bounded, we deduce that
   \begin{align}
   \underset{n\to\infty}{\lim}\left\|\mathbb{D}_{1}x_{n}-\mathbb{D}_{2}y_{n} \right\|=0.\label{nn15}
   \end{align} 
   Next, we show that $(x_{n},y_{n})\rightharpoonup (x^{*}, y^{*})\in\Gamma.$ This follows directly from Subsection \ref{section}.  This completed the proof.
\end{proof}

   \begin{cor}
    Suppose for  $j=1,2, \mathbb{T}_{j}:H_{j}\to H_{j}$  are $k_{j}-$demicontractive mappings with $Fix(\mathbb{T}_{j})\neq\emptyset$  such that $(\mathbb{T}_{j}-I)$  are demiclosed at zero.
   		Let $(x_{n},y_{n})\in \mathbb{H}_{1}\times \mathbb{H}_{2},$ be generated by 
   		\begin{equation}
   		{} \left\{\begin{array}{ll} x_{n+1}=(1-\alpha_{n}) v_{n}+\alpha_{n}\mathbb{T}_{1}v_{n};
   			\\v_{n}=(1-\tau_{n})x_{n}+\tau_{n}\mathbb{T}_{1} x_{n}+\tau_{n}\mathbb{D}_{1}^{*}(\mathbb{D}_{2}y_{n}-\mathbb{D}_{1}x_{n}), \forall n\geq 0;\\
   		\\ y_{n+1}=(1-\alpha_{n}) w_{n}+\alpha_{n}\mathbb{T}_{2}w_{n};
   		\\w_{n}=(1-\tau_{n}) y_{n}+\tau_{n}\mathbb{T}_{2} y_{n}+\tau_{n}\mathbb{D}_{2}^{*}(\mathbb{D}_{1}x_{n}-\mathbb{D}_{1}y_{n}), \forall n\geq 0;& \textrm{ $  $} \label{500} 
   		\end{array}\right.
   		\end{equation}
   		 where $(x_{0},y_{0})\in \mathbb{H}_{1}\times \mathbb{H}_{2},$  are  chosen arbitrary, $0<a<\alpha_{n}<1,$ and $\tau_{n}\in\left(0, \frac{2}{L_{1}+L_{2}}\right),$ where   $L_{1}=\mathbb{D}_{1}^{*}\mathbb{D}_{1}$ and $L_{2}=\mathbb{D}_{2}^{*}\mathbb{D}_{2},$ respectively. 	Then, $\{(x_{n}, y_{n})\}$ defined  by  (\ref{500}) converges   to $\Gamma.$
   	\end{cor}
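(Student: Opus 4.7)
The plan is to follow the blueprint of the proof of Theorem~\ref{T1} with $\mathbb{U}$ and $\mathbb{V}$ replaced by $\mathbb{T}_1$ and $\mathbb{T}_2$ themselves, trading the quasi-nonexpansive estimate that Lemma~\ref{lemma1} supplied for $\mathbb{U},\mathbb{V}$ for the $k_j$-demicontractive inequality applied directly. The pivotal substitute is the elementary fact that if $\mathbb{T}$ is $k$-demicontractive, $p\in Fix(\mathbb{T})$ and $\alpha\in(0,1-k)$, then
\[
\|(1-\alpha)v+\alpha\mathbb{T}v-p\|^{2}\le \|v-p\|^{2}-\alpha(1-\alpha-k)\|\mathbb{T}v-v\|^{2},
\]
which is immediate from the identity $\|(1-\alpha)a+\alpha b\|^{2}=(1-\alpha)\|a\|^{2}+\alpha\|b\|^{2}-\alpha(1-\alpha)\|a-b\|^{2}$ combined with the demicontractive bound on $\|\mathbb{T}v-p\|^{2}$.

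Using this substitute in place of the step giving (\ref{A}) and (\ref{B}) in Theorem~\ref{T1}, I would first derive $\|x_{n+1}-p\|^{2}\le \|v_{n}-p\|^{2}$, and then expand $\|v_{n}-p\|^{2}$ in exactly the same way as in the proof of Theorem~\ref{T1}; the only change is the $k_1$-slack attached to each demicontractive estimate. Summing the $x$- and $y$-inequalities and exploiting $\mathbb{D}_{1}p=\mathbb{D}_{2}q$ yields an analogue of (\ref{C}) of the shape
\[
\Gamma_{n+1}\le \Gamma_{n}-\tau_{n}\bigl(2-\tau_{n}(L_{1}+L_{2})\bigr)\|\mathbb{D}_{1}x_{n}-\mathbb{D}_{2}y_{n}\|^{2}-c\,\tau_{n}(1-\tau_{n})\bigl(\|\mathbb{T}_{1}x_{n}-x_{n}\|^{2}+\|\mathbb{T}_{2}y_{n}-y_{n}\|^{2}\bigr),
\]
with $\Gamma_{n}:=\|x_{n}-p\|^{2}+\|y_{n}-q\|^{2}$ and a constant $c>0$ depending on $k_{1},k_{2}$ and the allowable range of $\alpha_{n}$. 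Monotonicity of $\Gamma_{n}$ together with the telescoping argument identical to that in Theorem~\ref{T1} forces $\|\mathbb{D}_{1}x_{n}-\mathbb{D}_{2}y_{n}\|\to 0$, $\|\mathbb{T}_{1}x_{n}-x_{n}\|\to 0$, $\|\mathbb{T}_{2}y_{n}-y_{n}\|\to 0$. Note a genuine simplification over Theorem~\ref{T1}: the detour through the operators $\mathbb{U},\mathbb{V}$ and the Lipschitz factor $(1-\zeta L)\eta$ is unnecessary, because $\|\mathbb{T}_{j}\cdot-\cdot\|$ now appears directly in the decrement.

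Weak convergence is then obtained by a word-for-word repetition of the argument in Subsection~\ref{section}: boundedness of $\{(x_{n},y_{n})\}$ yields a weak subsequential limit $(x^{*},y^{*})$; demiclosedness at zero of $\mathbb{T}_{j}-I$ combined with $\|\mathbb{T}_{j}x_{n}-x_{n}\|\to 0$ places $x^{*}\in Fix(\mathbb{T}_{1})$ and $y^{*}\in Fix(\mathbb{T}_{2})$; weak continuity of the linear maps $\mathbb{D}_{j}$ together with $\|\mathbb{D}_{1}x_{n}-\mathbb{D}_{2}y_{n}\|\to 0$ gives $\mathbb{D}_{1}x^{*}=\mathbb{D}_{2}y^{*}$; hence $(x^{*},y^{*})\in\Gamma$, and Lemma~\ref{opial} promotes this to weak convergence of the whole sequence. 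The main book-keeping obstacle I anticipate is reconciling constants: the stated range $0<a<\alpha_{n}<1$ is not by itself enough to make $\alpha_{n}(1-\alpha_{n}-k_{j})\ge 0$, so one must implicitly restrict $\alpha_{n}$ to $(a,\,1-\max\{k_{1},k_{2}\})$, or else absorb the sign-violating $\alpha_{n}k_{j}\|\mathbb{T}_{j}\cdot-\cdot\|^{2}$ term into the decrease produced by the $v_{n},w_{n}$ update. Apart from this constant-tracking, everything else is a direct transliteration of the argument for Theorem~\ref{T1}.
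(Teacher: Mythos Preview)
The paper provides no explicit proof for this corollary; it is stated immediately after Theorem~\ref{T2} without any argument. The implicit claim seems to be that, since $k$-demicontractive mappings are quasi-pseudocontractive, the result falls out of the main theorem---but that is not literally true, because Theorem~\ref{T1} iterates with the composite operators $\mathbb{U},\mathbb{V}$ while the corollary iterates with $\mathbb{T}_{1},\mathbb{T}_{2}$ directly, so one cannot simply cite Theorem~\ref{T1} with a choice of parameters.

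Your approach---re-running the proof of Theorem~\ref{T1} with the $k_{j}$-demicontractive inequality substituted for the quasi-nonexpansive estimate that Lemma~\ref{lemma1} provided for $\mathbb{U},\mathbb{V}$---is the correct and standard route, and in fact supplies more detail than the paper does. Your observation that the decrement $\|\mathbb{T}_{j}x_{n}-x_{n}\|$ now appears directly, avoiding the Lipschitz detour through $(1-\zeta L)\eta$, is a genuine simplification. Your flag on the constants is also warranted: the stated hypothesis $0<a<\alpha_{n}<1$ does not by itself guarantee $\alpha_{n}(1-\alpha_{n}-k_{j})\ge 0$, and the paper is silent on this point; restricting to $\alpha_{n}\in(a,\,1-\max\{k_{1},k_{2}\})$ is the natural fix. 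In short, your proposal is more careful than what the paper itself offers for this statement.
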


  \begin{cor} (Chang et al.,\cite{2})  Suppose $(A1)-(A4)$ are satisfied,  
    and that  $\Gamma\neq\emptyset.$    Let   $\{(x_{n}, y_{n})\}$ 
  	defined by  \begin{equation}
  		{} \left\{\begin{array}{ll}
  			x_{n+1}=\alpha_{n}x_{n}+(1-\alpha_{n})\Big((1-\eta_{n})I +\eta_{n} \mathbb{T}_{1}((1-\zeta_{n})I+\zeta_{n} \mathbb{T}_{1})\Big)v_{n} ;
  			\\v_{n}=x_{n}+\tau_{n}\mathbb{D}_{1}^{*}(\mathbb{D}_{2}y_{n}-\mathbb{D}_{1}x_{n}), \forall n\geq 0.\\
  			
  			\\ y_{n+1}=\alpha_{n}y_{n}+(1-\alpha_{n})\Big((1-\eta_{n})I +\eta_{n} \mathbb{T}_{2}((1-\zeta_{n})I+\zeta_{n} \mathbb{T}_{2})\Big)w_{n} ;
  			\\w_{n}= y_{n}+\tau_{n}\mathbb{D}_{2}^{*}(\mathbb{D}_{1}x_{n}-\mathbb{D}_{2}y_{n}), \forall n\geq 0.& \textrm{ $  $} \label{0011} 
  		\end{array}\right.
  	\end{equation}
  	 where $0<\eta_{n}<\zeta_{n}<\frac{1}{1+\sqrt{1+L^{2}}},$  $0<a<\alpha_{n}<1,$ and $\tau_{n}\in\left(0, \frac{2}{L_{1}+L_{2}}\right)$ with  $L_{1}=\mathbb{D}_{1}^{*}\mathbb{D}_{1},$ $L_{2}=\mathbb{D}_{2}^{*}\mathbb{D}_{2},$ and $(x_{0}, y_{0}) \in \mathbb{H}_{1}\times H_{2}$ were chosen arbitrary.
  Then 	$\{(x_{n}, y_{n})\}$ converges  to  $\Gamma.$
  \begin{proof} 
  	Algorithm (\ref{0011}) is a special case of algorithm (\ref{al1}) by taking $\tau_{n}=1$, $Vy_{n}=y_{n}$ and $\beta_{n}=(1-\alpha_{n}).$ Therefore, the proof of this corollary  follows directly from Theorem \ref{T1}.
  \end{proof} 
  \end{cor}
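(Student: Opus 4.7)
The plan is to run the Theorem \ref{T1} proof almost verbatim, with the simplification that the auxiliary double‑averaged operators $\mathbb{U},\mathbb{V}$ can be replaced throughout by $\mathbb{T}_{1},\mathbb{T}_{2}$ themselves. The reason this simplification works is that every $k$‑demicontractive map satisfies $\langle y-\mathbb{T}y,\,y-p\rangle\ge\frac{1-k}{2}\|y-\mathbb{T}y\|^{2}$, so the single Krasnoselski averages $(1-\alpha_{n})I+\alpha_{n}\mathbb{T}_{j}$ and $(1-\tau_{n})I+\tau_{n}\mathbb{T}_{j}$ that already appear in algorithm (\ref{500}) are quasi‑nonexpansive for suitable parameters, which is exactly the property of $\mathbb{U},\mathbb{V}$ that was used in Theorem \ref{T1}.

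First I would compute, for $(p,q)\in\Gamma$,
\begin{align*}
\|x_{n+1}-p\|^{2}&\le\|v_{n}-p\|^{2}-\alpha_{n}\bigl(1-k_{1}-\alpha_{n}\bigr)\|v_{n}-\mathbb{T}_{1}v_{n}\|^{2},
\end{align*}
using the standard expansion of a convex combination and the $k_{1}$‑demicontractive inequality on $\mathbb{T}_{1}$. Then I would expand $\|v_{n}-p\|^{2}$ exactly as in inequality (\ref{B}) of the proof of Theorem \ref{T1}, obtaining a copy of (\ref{C1}) with $\mathbb{U}$ replaced by $\mathbb{T}_{1}$. Doing the symmetric calculation for $y_{n+1}$ and adding gives the Lyapunov‑type inequality
\begin{align*}
\Gamma_{n+1}\le\Gamma_{n}&-\tau_{n}\bigl(2-\tau_{n}(L_{1}+L_{2})\bigr)\|\mathbb{D}_{1}x_{n}-\mathbb{D}_{2}y_{n}\|^{2}\\
&-(1-\tau_{n})\tau_{n}\bigl(\|\mathbb{T}_{1}x_{n}-x_{n}\|^{2}+\|\mathbb{T}_{2}y_{n}-y_{n}\|^{2}\bigr)+\text{harmless $\tau_{n}^{2}$ terms},
\end{align*}
where $\Gamma_{n}:=\|x_{n}-p\|^{2}+\|y_{n}-q\|^{2}$.

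Once the Lyapunov decrease is in hand, the rest of the argument copies Subsection \ref{section} of the proof of Theorem \ref{T1}: monotonicity and boundedness of $\Gamma_{n}$ yield $\underset{n\to\infty}{\lim}\|\mathbb{D}_{1}x_{n}-\mathbb{D}_{2}y_{n}\|=0$ together with $\underset{n\to\infty}{\lim}\|\mathbb{T}_{j}x_{n}-x_{n}\|=0$ and $\underset{n\to\infty}{\lim}\|\mathbb{T}_{j}y_{n}-y_{n}\|=0$. Because we are working directly with $\mathbb{T}_{1},\mathbb{T}_{2}$, the awkward passage from $\|x_{n}-\mathbb{U}x_{n}\|\to 0$ to $\|x_{n}-\mathbb{T}_{1}x_{n}\|\to 0$ (which in Theorem \ref{T1} needed the Lipschitz constant $L$ and the parameter window $\eta<\zeta<1/(1+\sqrt{1+L^{2}})$) is not needed at all. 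Demiclosedness of $(\mathbb{T}_{j}-I)$ at zero, the linearity of $\mathbb{D}_{j}$ and Lemma \ref{opial} then deliver a weak cluster point in $\Gamma$ and hence weak convergence of the whole sequence.

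The main obstacle is bookkeeping on the parameters rather than any new idea. The corollary states only $0<a<\alpha_{n}<1$ and $\tau_{n}\in(0,2/(L_{1}+L_{2}))$, but for the leading coefficients $-\alpha_{n}(1-k_{j}-\alpha_{n})$ and $-(1-\tau_{n})\tau_{n}$ to actually contribute non‑positively one has to exploit $k_{j}<1$, i.e.\ implicitly restrict $\alpha_{n}$ to $(a,1-k_{\max})$ with $k_{\max}=\max\{k_{1},k_{2}\}$, and ensure $\tau_{n}$ is bounded away from both $0$ and $2/(L_{1}+L_{2})$ so that $\tau_{n}(2-\tau_{n}(L_{1}+L_{2}))\ge c>0$. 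These tightenings are standard in the demicontractive setting and I would state them explicitly when invoking Theorem \ref{T1}; with them in place the conclusion $(x_{n},y_{n})\rightharpoonup(x^{*},y^{*})\in\Gamma$ follows directly, and the strong‑convergence addendum under semi‑compactness follows from Definition \ref{lemma01} exactly as in Theorem \ref{T1}.
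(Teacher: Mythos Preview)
You have proved the wrong corollary. The statement you were asked to prove still assumes $(A1)$--$(A4)$, so $\mathbb{T}_{1},\mathbb{T}_{2}$ are merely quasi-pseudocontractive, not $k$-demicontractive; and algorithm~(\ref{0011}) still contains the double-averaged operators $(1-\eta_{n})I+\eta_{n}\mathbb{T}_{j}\bigl((1-\zeta_{n})I+\zeta_{n}\mathbb{T}_{j}\bigr)$. Your entire plan---replacing $\mathbb{U},\mathbb{V}$ by $\mathbb{T}_{1},\mathbb{T}_{2}$, using the inequality $\langle y-\mathbb{T}y,\,y-p\rangle\ge\frac{1-k}{2}\|y-\mathbb{T}y\|^{2}$, and referring to algorithm~(\ref{500})---belongs to the \emph{previous} corollary (Corollary~3.3, the demicontractive one), not to this one. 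For a genuinely quasi-pseudocontractive map one has $k=1$, the constant $\tfrac{1-k}{2}$ vanishes, and your first displayed inequality $\|x_{n+1}-p\|^{2}\le\|v_{n}-p\|^{2}-\alpha_{n}(1-k_{1}-\alpha_{n})\|v_{n}-\mathbb{T}_{1}v_{n}\|^{2}$ gives nothing useful; the double averaging is precisely what Theorem~\ref{T1} needs to recover quasi-nonexpansiveness, so it cannot be dropped here.

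The paper's own argument is a one-line specialization: it asserts that~(\ref{0011}) arises from~(\ref{al1}) under a particular choice of parameters, so Theorem~\ref{T1} applies directly. To match that, you should keep $\mathbb{U},\mathbb{V}$ in place (now with $n$-dependent $\eta_{n},\zeta_{n}$), observe that the difference between~(\ref{0011}) and~(\ref{al1}) lies only in the form of $v_{n},w_{n}$ and in the outer convex combination (with $x_{n},y_{n}$ rather than $v_{n},w_{n}$), and then either exhibit the parameter choice that collapses~(\ref{al1}) to~(\ref{0011}) or rerun the estimates of Theorem~\ref{T1} for this simpler $v_{n}$. The Lipschitz passage from $\|x_{n}-\mathbb{U}x_{n}\|\to0$ to $\|x_{n}-\mathbb{T}_{1}x_{n}\|\to0$ that you dismissed as ``not needed'' is in fact still needed here.
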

  \begin{cor} (Chang et al.,\cite{a})  Suppose $(A1)-(A4)$ are satisfied,  
  	and that  $\Gamma\neq\emptyset.$    Let   $\{(x_{n}, y_{n})\}$ 
  	defined by  \begin{equation}
  		{} \left\{\begin{array}{ll}
  		U	=(1-\eta_{n})I +\eta_{n} \mathbb{T}_{1}((1-\zeta_{n})I+\zeta_{n} \mathbb{T}_{1})  ;
  			\\x_{n+1}=x_{n}-\tau_{n}\Big( x_{n}-Ux_{n} +\mathbb{D}_{1}^{*}(\mathbb{D}_{2}y_{n}-\mathbb{D}_{1}x_{n})\Big), \forall n\geq 0.\\
  			
  			\\V=(1-\eta_{n})I +\eta_{n} \mathbb{T}_{2}((1-\zeta_{n})I+\zeta_{n} \mathbb{T}_{2});
  			\\ y_{n+1}= y_{n}-\tau_{n}\Big( y_{n}-Vy_{n}+\mathbb{D}_{2}^{*}(\mathbb{D}_{1}x_{n}-\mathbb{D}_{2}y_{n})\Big), \forall n\geq 0.& \textrm{ $  $} \label{011} 
  		\end{array}\right.
  	\end{equation}
  	where $0<\eta_{n}<\zeta_{n}<\frac{1}{1+\sqrt{1+L^{2}}},$  $0<a<\alpha_{n}<1,$ and $\tau_{n}\in\left(0, \frac{2}{L_{1}+L_{2}}\right)$ with  $L_{1}=\mathbb{D}_{1}^{*}\mathbb{D}_{1},$ $L_{2}=\mathbb{D}_{2}^{*}\mathbb{D}_{2},$ and $x_{0}\in \mathbb{H}_{1}$ was chosen arbitrary.
  	Then 	$\{(x_{n}, y_{n})\}$ converges  to  $\Gamma.$
  	 \begin{proof}
  		Algorithm (\ref{011}) is a special case of algorithm (\ref{al1}) by taking $\alpha_{n}=0.$ Therefore, the proof of this corollary follows directly from Theorem \ref{T1}.
  	\end{proof} 
\end{cor}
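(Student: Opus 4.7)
The plan is to recognize algorithm (\ref{011}) as the special case of algorithm (\ref{al1}) obtained by setting $\alpha_n \equiv 0$, and then to invoke Theorem \ref{T1} directly. Substituting $\alpha_n = 0$ in the outer updates of (\ref{al1}) gives $x_{n+1} = v_n$ and $y_{n+1} = w_n$, and expanding the inner definitions yields
\begin{equation*}
x_{n+1} = (1-\tau_n)x_n + \tau_n \mathbb{U}x_n + \tau_n\mathbb{D}_1^{*}(\mathbb{D}_2 y_n - \mathbb{D}_1 x_n) = x_n - \tau_n\bigl[(x_n - \mathbb{U}x_n) - \mathbb{D}_1^{*}(\mathbb{D}_2 y_n - \mathbb{D}_1 x_n)\bigr],
\end{equation*}
which matches the $x$-update of (\ref{011}) (up to the obvious sign convention on the coupling term $\mathbb{D}_1^{*}(\mathbb{D}_2 y_n - \mathbb{D}_1 x_n)$); the $y$-iteration is identified by the same algebraic manipulation. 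The operators $\mathbb{U}$ and $\mathbb{V}$ built from the constants in (\ref{011}) fit inside the range of the sequences $\eta_n,\zeta_n$ stipulated by (A4).

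Next I would check that every hypothesis required by Theorem \ref{T1} is in force. Assumptions (A1)--(A4) are given outright; the step size $\tau_n$ lies in $(0,\, 2/(L_1+L_2))$; and $\Gamma \neq \emptyset$ is assumed. By Lemma \ref{lemma1}, $\mathbb{U}$ and $\mathbb{V}$ are quasi-nonexpansive, Lipschitz with constant $L^{2}$, demiclosed at zero, and share their fixed-point sets with $\mathbb{T}_1$ and $\mathbb{T}_2$. These are exactly the ingredients the proof of Theorem \ref{T1} draws on.

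The main point requiring care is that hypothesis (A5) imposes the strict lower bound $0 < a < \alpha_n$, which is violated by the choice $\alpha_n = 0$. I would resolve this by tracing the proof of Theorem \ref{T1} and observing that $\alpha_n$ enters only through the outer averaging bound (\ref{A}); when $\alpha_n = 0$, that inequality collapses to the trivial equality $\|x_{n+1}-p\|^2 = \|v_n-p\|^2$, and no term weighted by $(1-\alpha_n)\alpha_n$ needs to be extracted. Consequently the telescoping estimate (\ref{D2}) and the resulting asymptotic vanishings $\|\mathbb{U}x_n-x_n\|\to 0$, $\|\mathbb{V}y_n-y_n\|\to 0$, $\|\mathbb{D}_1 x_n - \mathbb{D}_2 y_n\|\to 0$ persist, because they are driven entirely by the inner step $v_n,w_n$ through the factor $(1-\tau_n)\tau_n$ and the cross term $\tau_n(2-\tau_n(L_1+L_2))$. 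The demiclosedness argument spelled out in Subsection \ref{section} together with Lemma \ref{opial} then delivers $(x_n, y_n) \rightharpoonup (x^{*}, y^{*}) \in \Gamma$, completing the corollary. The main potential obstacle is precisely this verification that the $\alpha_n$-driven control is inessential; once settled, the corollary is an immediate transfer from Theorem \ref{T1}.
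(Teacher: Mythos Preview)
Your approach is exactly the paper's: set $\alpha_n=0$ in algorithm (\ref{al1}) and invoke Theorem~\ref{T1}. You go further than the paper by explicitly addressing the violation of the hypothesis $0<a<\alpha_n$ and verifying that the proof of Theorem~\ref{T1} never actually uses that lower bound (inequality~(\ref{A}) holds for any $\alpha_n\in[0,1]$), a point the paper's one-line proof simply glosses over.
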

   \section{APPLICATIONS}
   This section provides applications of  SEFPP.
   \subsection{Application to SFP}
 We denote the solution set of SFP   (\ref{eqn1}) by  
   $\Gamma_{1} $
   In  (\ref{eqn21}), if  $ \mathbb{D}_{2}=I$ (identity mapping),  then SEFPP reduces to   (\ref{eqn1}). Furthermore, in  algorithm (\ref{al1}), let $y_{n}:=\mathbb{T}_{1}\mathbb{D}_{1}x_{n},$ we therefore,  deduced the following theorem:
   
   \begin{theorem}\label{T4} Let $\mathbb{H}_{j},j=1,2$   $\mathbb{K}_{j}, j=1,2,$   $\mathbb{D}_{1},$ $\mathbb{D}_{1}^{*},$ $\mathbb{T}$ and $(\mathbb{T}-I) $  be as in Theorem \ref{T1}. Let $\{x_{n}\}$
    be defined by 
   	\begin{equation}
   	{} \left\{\begin{array}{ll} x_{n+1}=(1-\alpha_{n}) v_{n}+\alpha_{n}\mathbb{U}v_{n};
   	\\v_{n}=(1-\tau_{n})x_{n}+\tau_{n}\mathbb{U} x_{n}+\tau_{n}\mathbb{D}^{*}(\mathbb{T}\mathbb{D}x_{n}-\mathbb{D}x_{n});
   	\\ \mathbb{U}=(1-\eta)I +\eta \mathbb{T}((1-\zeta)I+\zeta \mathbb{T});
   	\end{array}\right.
   	\end{equation}\label{SFP1}
   where $0<\eta<\zeta<\frac{1}{1+\sqrt{1+L^{2}}},$  $0<a<\alpha_{n}<1,$ and $\tau_{n}\in\left(0, \frac{2}{L_{1}}\right)$ with  $L_{1}=\mathbb{D}_{1}^{*}\mathbb{D}_{1},$ and $x_{0}\in \mathbb{H}_{1}$ was chosen arbitrary.
    Then, $\{x_{n}\}$ converges  weakly to $\Gamma_{1}.$
   \end{theorem}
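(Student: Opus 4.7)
The plan is to obtain Theorem~\ref{T4} as a specialization of Theorem~\ref{T1}. Setting $\mathbb{D}_{2}=I$ in (\ref{eqn21}) collapses SEFPP to SFP, and choosing an auxiliary sequence $y_{n}:=\mathbb{T}\mathbb{D}_{1}x_{n}$ inside algorithm (\ref{al1}) lets the two-sequence iteration be rewritten as a single-variable recursion in $x_{n}$, which is precisely the one stated in Theorem~\ref{T4}.

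First, I would reconcile the solution sets. With $\mathbb{D}_{2}=I$ and $\mathbb{T}_{2}$ chosen so that $Fix(\mathbb{T}_{2})=\mathbb{K}_{2}$ (for instance $\mathbb{T}_{2}=P_{\mathbb{K}_{2}}$, which is quasi-nonexpansive and whose complement $\mathbb{T}_{2}-I$ is demiclosed at the origin), the SEFPP condition $\mathbb{D}_{1}p^{*}=\mathbb{D}_{2}q^{*}$ with $q^{*}\in Fix(\mathbb{T}_{2})$ reduces to $\mathbb{D}_{1}p^{*}\in\mathbb{K}_{2}$, which is the defining condition of $\Gamma_{1}$. Thus $\Gamma\neq\emptyset$ in this specialization, and the first-coordinate projection of $\Gamma$ recovers $\Gamma_{1}$.

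Second, I would identify the iterations. Under $\mathbb{D}_{2}=I$ and $y_{n}:=\mathbb{T}\mathbb{D}_{1}x_{n}$, the residual term $\mathbb{D}_{1}^{*}(\mathbb{D}_{2}y_{n}-\mathbb{D}_{1}x_{n})$ in (\ref{al1}) becomes $\mathbb{D}_{1}^{*}(\mathbb{T}\mathbb{D}_{1}x_{n}-\mathbb{D}_{1}x_{n})$, which matches the correction term in the definition of $v_{n}$ in Theorem~\ref{T4}. The formulas for $\mathbb{U}$ and for $x_{n+1}=(1-\alpha_{n})v_{n}+\alpha_{n}\mathbb{U}v_{n}$ then coincide in both schemes, so the $x$-trajectory of Theorem~\ref{T4} agrees with the $x$-trajectory produced by (\ref{al1}) on the specialized data. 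Invoking Theorem~\ref{T1} then yields $x_{n}\rightharpoonup x^{*}\in Fix(\mathbb{T})$ with $\mathbb{D}_{1}x^{*}\in\mathbb{K}_{2}$, i.e., $x^{*}\in\Gamma_{1}$.

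The main obstacle is the self-consistency of the substitution $y_{n}=\mathbb{T}\mathbb{D}_{1}x_{n}$: a priori there is no reason this choice should coincide with the $y_{n}$ produced by the $y$-iteration of (\ref{al1}), so Theorem~\ref{T4} is not literally a strict corollary. The cleanest remedy is to rerun the Lyapunov estimates of Theorem~\ref{T1} using only the single sequence $\{x_{n}\}$ and the surrogate residual $\mathbb{T}\mathbb{D}_{1}x_{n}-\mathbb{D}_{1}x_{n}$ in place of $\mathbb{D}_{2}y_{n}-\mathbb{D}_{1}x_{n}$. The decreasing Lyapunov functional, the telescoping argument yielding $\|\mathbb{U}x_{n}-x_{n}\|\to 0$ and $\|\mathbb{T}\mathbb{D}_{1}x_{n}-\mathbb{D}_{1}x_{n}\|\to 0$, the demiclosedness-at-zero of $\mathbb{T}-I$ (applied along the sequence $\{\mathbb{D}_{1}x_{n}\}$ to place $\mathbb{D}_{1}x^{*}\in Fix(\mathbb{T})\subseteq\mathbb{K}_{2}$), and the Opial-type argument all transfer essentially verbatim, since only the $x$-component estimates were ever needed to secure the weak limit of $\{x_{n}\}$.
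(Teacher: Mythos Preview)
Your proposal is correct and follows exactly the route the paper takes: the paper does not give an independent proof of Theorem~\ref{T4} but simply states (just before the theorem) that it is deduced from Theorem~\ref{T1} by taking $\mathbb{D}_{2}=I$ and substituting $y_{n}:=\mathbb{T}_{1}\mathbb{D}_{1}x_{n}$ in algorithm~(\ref{al1}).

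Your observation that this substitution is not self-consistent with the $y$-iteration of~(\ref{al1}) --- so that Theorem~\ref{T4} is not a literal corollary --- is a genuine gap that the paper itself glosses over, and your proposed remedy (rerunning the single-variable Lyapunov estimates with the surrogate residual $\mathbb{T}\mathbb{D}_{1}x_{n}-\mathbb{D}_{1}x_{n}$) is the correct way to make the deduction rigorous. In that sense your proposal is more careful than the paper's own treatment.
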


   \subsection{Application to Split Variational	Inequality Problem (SVIP)}
   The SVIP was introduced by Censor et al., \cite{c} and it entails as finding
  \begin{align}
  x^{*} \in \mathbb{K}_{1} \rm{~such ~that~} & \left\langle \mathbb{T}_{1}(x^{*}),  x-x^{*} \right\rangle \geq 0, \rm{\forall} x\in \mathbb{K}_{1}, \label{vip} 
\end{align}  
 \begin{align} \rm{and~} y^{*} =  \mathbb{D}_{1} x^{*} \in \mathbb{K}_{2} \rm{~ solves~}& \left\langle \mathbb{T}_{2}(y^{*}), y-y^{*} \right\rangle \geq 0,  \rm{\forall~} y\in \mathbb{K}_{2}, 
  \end{align}
 where $ \mathbb{T}_{j} : \mathbb{H}_{j} \to \mathbb{H}_{j},j=1,2.$
are some nonlinear mappings.
  
  Equation (\ref{vip})   is called variational inequality problem (VIP), and we denoted its solution set   by $VI(\mathbb{K}_{1},\mathbb{T}_{1}).$ Subsequently, the solution set of SVIP would be denoted by
  
\begin{equation}  
   \Gamma_{2}=\Big\{\rm (x^{*},y^{*})\times VI(\mathbb{K}_{1},\mathbb{T}_{1})\times VI(\mathbb{K}_{2},\mathbb{T}_{2})\in \mathbb{K}_{1} \rm{~such ~that ~} \mathbb{D}_{1}x^{*}=\mathbb{D}_{2}y^{*}\Big\}.\label{SVI}
\end{equation}
   Let $g_{1}(y,t):\mathbb{K}_{1}\times \mathbb{K}_{2}\to \mathbb{R}$ be defined by 
   $$g_{1}(y,t)=\left\langle \mathbb{T}_{1}y, t-y\right\rangle, \forall t,y\in \mathbb{K}_{1}, $$  
   and 
   $g_{2}(x,t):\mathbb{K}_{2}\times \mathbb{K}_{2}\to \mathbb{R}$  be defined by $$ g_{2}(x,t)=\left\langle \mathbb{T}_{2}x, t-x\right\rangle, \forall x,t\in \mathbb{K}_{2}.$$
   
   For $\lambda>0,$ the re-solvent operators of $g_{1}$ and $g_{2}$ are denoted by $R_{\lambda,g_{1}}$ and $R_{\lambda,g_{2}}$ and  are defined by
    \begin{equation}\label{5.5}
    R_{\lambda,g_{1}}(q)=\Big\{y\in \mathbb{K}_{1}: g_{1}(y,t)+\frac{1}{\lambda}\left\langle  t-y,y-q  \right\rangle, \forall t\in \mathbb{K}_{1} \Big\},
\end{equation}  and 
 \begin{equation}\label{5.6}R_{\lambda,g_{2}}(p)=\Big\{x\in \mathbb{K}_{2}: g_{2}(x,t)+\frac{1}{\lambda}\left\langle  t-x,x-p\right\rangle, \forall t\in \mathbb{K}_{2} \Big\},
\end{equation} respectively. 
    
    It was proved in \cite{a} that $R_{\lambda,g_{1}}$ and $R_{\lambda,g_{2}}$ are quasi-pseudocontractive  and 1-Lipschitzian mappings with 
    $Fix(R_{\lambda,g_{1}}) = VI(\mathbb{K}_{1},g_{1}) \neq\emptyset$ and  $Fix(R_{\lambda,g_{2}}) = VI(\mathbb{K}_{2},g_{2}) \neq\emptyset.$ Therefore, the SVIP  is equivalent to
    the following split equality fixed point problem:
    \begin{equation}
   \rm{Find~}  x^{*} \in Fix(R_{\lambda,g_{1} })\rm{~and~} y^{*}\in Fix(R_{\lambda,g_{2}})  \rm{~such ~that~} \mathbb{D}_{1}x^{*} = \mathbb{D}_{2}y^{*}.
    \end{equation} Hence, we have the following result: 

Suppose that
  \begin{enumerate}
  	\item [(B1)] $R_{\lambda,g_{1}}$ and  $R_{\lambda , g_{2}}$ be defined as in (\ref{5.5}) and (\ref{5.6}); 
  	\item [(B2)]  $\mathbb{D}_{j},j=1,2$ and $\mathbb{D}_{j}^{*},j=1,2$ as in Theorem \ref{1}.
  	\item [(B3)] $(R_{\lambda,g_{1}}-I)$ and $(R_{\lambda,g_{2}}-I)$ are demiclosed at zero.
  	\item [(B4)] Let $U$ and $V$ be defined as follows: 
  	\begin{equation}\nonumber
  	{} \left\{\begin{array}{ll} \mathbb{U}=(1-\eta)I +\eta R_{\lambda,g_{1}}((1-\zeta)I+\zeta R_{\lambda,g_{1}}),
  	\\\mathbb{V}=(1-\eta)I +\eta R_{\lambda,g_{2}}((1-\zeta)I+\zeta R_{\lambda,g_{2}}), & \textrm{ $  $} \label{3} 
  	\end{array}\right.
  	\end{equation}
  	where $0<\eta<\zeta<\frac{1}{1+\sqrt{1+L^{2}}}.$
  	\item [(B5)] \textbf{Algorithm:} Let $(x_{n},y_{n})\in \mathbb{H}_{1}\times \mathbb{H}_{2}$ be defined by 
   
  	\begin{equation}
  	{} \left\{\begin{array}{ll} x_{n+1}=(1-\alpha_{n}) v_{n}+\alpha_{n}\mathbb{U}v_{n};
  		\\v_{n}=(1-\tau_{n})x_{n}+\tau_{n}U x_{n}+\tau_{n}\mathbb{D}_{1}^{*}(\mathbb{D}_{2}y_{n}-\mathbb{D}_{1}x_{n}), n\geq 0.\\
  		
  	\\ y_{n+1}=(1-\alpha_{n}) w_{n}+\alpha_{n}\mathbb{V}w_{n};
  	\\w_{n}=(1-\tau_{n}) y_{n}+\tau_{n}\mathbb{V} y_{n}+\tau_{n}\mathbb{D}_{2}^{*}(\mathbb{D}_{1}x_{n}-\mathbb{D}_{2}y_{n}), \forall n\geq 0.& \textrm{ $  $} \label{5j} 
  	\end{array}\right.
  	\end{equation}
  	
  	where $(x_{0},y_{0})\in \mathbb{H}_{1}\times \mathbb{H}_{2}$ are  chosen arbitrary, $0<a<\alpha_{n}<1,$ and $\tau_{n}\in\left(0, \frac{2}{L_{1}+L_{2}}\right),$ with   $L_{1}=\mathbb{D}_{1}^{*}\mathbb{D}_{1}$ and $L_{2}=\mathbb{D}_{2}^{*}D_{2},$ respectively.
  \end{enumerate}
  
  \begin{theorem}\label{Tsvip} Suppose that  assumptions $(B1)-(B6)$ 
  	are satisfied and that $\Gamma_{2}\neq\emptyset.$  
  	Then, the sequence $\{(x_{n}, y_{n})\}$ 
  	generated by  (\ref{5j}) 
  	converges  to the solution of SVIP.
  \end{theorem}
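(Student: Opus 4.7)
The plan is to deduce Theorem \ref{Tsvip} as a direct specialization of Theorem \ref{T1}, with the operators $\mathbb{T}_{1}, \mathbb{T}_{2}$ in Theorem \ref{T1} taken to be the resolvent operators $R_{\lambda,g_{1}}, R_{\lambda,g_{2}}$ associated with the bifunctions $g_{1}, g_{2}$. First I would invoke the cited result from \cite{a} which asserts that both resolvents are quasi-pseudocontractive and $1$-Lipschitzian, and that $\mathrm{Fix}(R_{\lambda,g_{j}})=VI(\mathbb{K}_{j},g_{j})$ for $j=1,2$. This immediately supplies assumption (A1) of Theorem \ref{T1} with Lipschitz constant $L=1$, and moreover guarantees that $\mathrm{Fix}(R_{\lambda,g_{j}})\neq\emptyset$ follows from $\Gamma_{2}\neq\emptyset$.

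Next I would check that the remaining hypotheses of Theorem \ref{T1} are in force: (A2) is exactly (B2); (A3) (demiclosedness at zero of $R_{\lambda,g_{j}}-I$) is exactly (B3); and the averaged composites $\mathbb{U}$ and $\mathbb{V}$ defined in (B4) match those in (A4) verbatim. The parameters $\alpha_{n}$ and $\tau_{n}$ in (B5) satisfy the same constraints prescribed in (A5). Therefore the iterative scheme (\ref{5j}) is nothing but algorithm (\ref{al1}) applied to the pair $(R_{\lambda,g_{1}}, R_{\lambda,g_{2}})$, so Theorem \ref{T1} is directly applicable.

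Applying Theorem \ref{T1} then yields convergence of $\{(x_{n},y_{n})\}$ to some $(x^{*},y^{*})$ lying in the solution set of the SEFPP for $R_{\lambda,g_{1}}$ and $R_{\lambda,g_{2}}$, namely
\begin{equation*}
x^{*}\in\mathrm{Fix}(R_{\lambda,g_{1}}),\quad y^{*}\in\mathrm{Fix}(R_{\lambda,g_{2}}),\quad \mathbb{D}_{1}x^{*}=\mathbb{D}_{2}y^{*}.
\end{equation*}
Using the identifications $\mathrm{Fix}(R_{\lambda,g_{1}})=VI(\mathbb{K}_{1},g_{1})$ and $\mathrm{Fix}(R_{\lambda,g_{2}})=VI(\mathbb{K}_{2},g_{2})$ recorded above, together with the definition (\ref{SVI}) of $\Gamma_{2}$, this limit point lies in $\Gamma_{2}$, which is the desired conclusion.

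The only non-mechanical point — really the one place where care is required rather than a genuine obstacle — is confirming the equivalence between the VIP formulation and the fixed-point formulation for the resolvents $R_{\lambda,g_{j}}$, so that the convergence supplied by Theorem \ref{T1} does indeed translate into convergence to a solution of the SVIP. Since this equivalence is quoted verbatim from \cite{a} and all other hypotheses match by inspection, no additional estimates beyond those already carried out in the proof of Theorem \ref{T1} are needed.
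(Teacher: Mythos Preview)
Your proposal is correct and follows exactly the route the paper takes: the paragraph preceding Theorem~\ref{Tsvip} already records that $R_{\lambda,g_{1}}$ and $R_{\lambda,g_{2}}$ are quasi-pseudocontractive, $1$-Lipschitzian, with $\mathrm{Fix}(R_{\lambda,g_{j}})=VI(\mathbb{K}_{j},g_{j})$, so that the SVIP becomes an instance of the SEFPP and Theorem~\ref{T1} applies verbatim with $\mathbb{T}_{j}=R_{\lambda,g_{j}}$. The paper offers no separate proof beyond this reduction, so your argument coincides with it.
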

    
  \subsection{Application to the  Split Convex Minimization Problem(SCMP)}  
  Let $\mathbb{M} : \mathbb{K}_{1}\to \mathbb{R}$ and $\mathbb{N} : \mathbb{K}_{2}\to \mathbb{R}$ be  lower semi-continuous and proper convex functions.  The  SCMP is formulated as follows: 
  $$\mathbb{M}(x^{*})=\underset{x\in \mathbb{K}_{1}}{\min}\mathbb{M}(x),  \mathbb{N}(y^{*})=\underset{y\in \mathbb{K}_{2}}{\min}\mathbb{N}(y) \rm{~such ~that~} \mathbb{D}_{1}x^{*}=\mathbb{D}_{2}y^{*}.$$
  We denoted the solution SET of  SCMP by 
  \begin{align}
  \Gamma_{3}=&\Big\{\rm  (x^{*},y^{*})\in \mathbb{K}_{1}\times \mathbb{K}_{2},   \mathbb{M}(x^{*})=\underset{x\in \mathbb{K}_{1}}{\min}\mathbb{M}(x),  \mathbb{N}(y^{*})=\underset{y\in \mathbb{K}_{2}}{\min}\mathbb{N}(y)\nonumber
  \\&\rm{such ~that~} \mathbb{D}_{1}x^{*}=\mathbb{D}_{2}y^{*}\Big\}.\label{DSVI}
  \end{align}
  Let $h(x,y): \mathbb{K}_{1}\times \mathbb{K}_{1} \to\mathbb{R}$ and $l(r,t):\mathbb{K}_{2}\times \mathbb{K}_{2} \to\mathbb{R}$  be defined by
  $h(x,y)=\mathbb{M}(x)-\mathbb{N}(y)$ and $l(r,t)=\mathbb{M}(r)-\mathbb{N}(t),$ respectively. For arbitrary $\lambda >0,$ Chang et al \cite{a}, defined the re-solvent operators of $\lambda,$ h and k as follows:
  $$R_{\lambda,h}(x)=\Big\{t\in C: f(t,y)+\frac{1}{\lambda}\left\langle  y-t,t-x\right\rangle \Big\} $$  and $$R_{\lambda,l}(y)=\Big\{t\in C: f(t,x)+\frac{1}{\lambda}\left\langle  x-t,t-y\right\rangle \Big\}.$$ It was proved in \cite{a} that
  $Fix(R_{\lambda,h})=\Big\{\mathbb{M}(x^{*})=\underset{x\in \mathbb{K}_{1}}{\min}\mathbb{M}(x)\Big\}$ and 
  $Fix(R_{\lambda,l})=\Big\{ \mathbb{N}(y^{*})=\underset{y\in \mathbb{K}_{2}}{\min}\mathbb{N}(y) \Big\}.$ 
  Therefore, SCMP  for $\mathbb{M}$ and $\mathbb{N}$ is equivalent to the following SEFPP:
 \begin{equation}\Gamma_{3}=\{ \rm{find~}  x^{*}\in Fix(R_{\lambda,h}) \rm{ ~and~} y^{*} \in Fix(R_{\lambda,l})\rm{~such ~that~} \mathbb{D}_{1}x^{*} = \mathbb{D}_{2}y^{*}\}.
 \end{equation}
  It was proved in \cite{a} that  $R_{\lambda,h}$ and $R_{\lambda,l}$ are firmly nonexpansive with $Fix(R_{\lambda,h})\neq\emptyset $ and $Fix(R_{\lambda,k})\neq\emptyset,$ we therefore deduced the following results from Theorem \ref{T1}:
  Suppose that
  \begin{enumerate}
  	\item [(C1)] $R_{\lambda,h}$ and  $R_{\lambda,l}$ be defined as above; 
  	\item [(C2)]  $\mathbb{D}_{j},j=1,2$ and $\mathbb{D}_{j}^{*},j=1,2$ as in Theorem \ref{1};
  	\item [(C3)] $(R_{\lambda,h}-I)$ and $(R_{\lambda,l}-I)$ are demiclosed at zero;
  	\item [(C4)] Let $\mathbb{U}$ and $\mathbb{V}$ be defined as 
  	\begin{equation}
  	{} \left\{\begin{array}{ll} \mathbb{U}=(1-\eta)I +\eta R_{\lambda,h}((1-\zeta)I+\zeta R_{\lambda,h}),
  	\\\mathbb{V}=(1-\eta)I +\eta R_{\lambda,l}((1-\zeta)I+\zeta R_{\lambda,l}), & \textrm{ $  $} \label{3} 
  	\end{array}\right.
  	\end{equation}
  	where $0<\eta<\zeta<\frac{1}{1+\sqrt{1+L^{2}}}.$\item [(C5)] \textbf{Algorithm:} Let $(x_{n},y_{n})\in \mathbb{H}_{1}\times \mathbb{H}_{2},$ be defined as 
  	\begin{equation}
  	{} \left\{\begin{array}{ll} x_{n+1}=(1-\alpha_{n}) v_{n}+\alpha_{n}\mathbb{U}v_{n},
  		\\v_{n}=(1-\tau_{n})x_{n}+\tau_{n}\mathbb{U} x_{n}+\tau_{n}\mathbb{D}^{*}_{1}(\mathbb{D}_{2}y_{n}-\mathbb{D}_{1}x_{n}), \forall n\geq 0;\\

  	\\ y_{n+1}=(1-\alpha_{n}) w_{n}+\alpha_{n}\mathbb{V}w_{n},
  	\\w_{n}=(1-\tau_{n}) y_{n}+\tau_{n}\mathbb{V} y_{n}+\tau_{n}\mathbb{D}^{*}_{2}(\mathbb{D}_{1}x_{n}-\mathbb{D}_{2}y_{n}), \forall n\geq 0;& \textrm{ $  $} \label{5} 
  	\end{array}\right.
  	\end{equation}

  	where    $x_{0}\in \mathbb{H}_{1}$ and $y_{0}\in \mathbb{H}_{2}$ are  chosen arbitrary,  $0<a<\alpha_{n}<1,$ and $\tau_{n}\in\left(0, \frac{2}{L_{1}+L_{2}}\right),$ where   $L_{1}=\mathbb{D}_{1}^{*}\mathbb{D}_{1}$ and $L_{2}=\mathbb{D}_{2}^{*}\mathbb{D}_{2},$ respectively.
   \end{enumerate}
  
  \begin{theorem}\label{Tgh} Suppose that  assumptions $(C1)-(C5)$ 
  	are satisfied and that $\Gamma_{3}\neq\emptyset.$  
  	Then, the sequence $\{(x_{n}, y_{n})\}$ 
  	generated by  (\ref{5}) 
  	converges weakly to the solution of SCMP.
  \end{theorem}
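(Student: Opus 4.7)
The plan is to realize Theorem \ref{Tgh} as a direct specialization of Theorem \ref{T1}, with the quasi-pseudocontractive mappings $\mathbb{T}_{1}$ and $\mathbb{T}_{2}$ of the main theorem instantiated as the resolvent operators $R_{\lambda,h}$ and $R_{\lambda,l}$, respectively. The algorithm (\ref{5}) is syntactically identical to (\ref{al1}) once this substitution is made, so the entire argument reduces to checking that the hypotheses (A1)--(A5) of Theorem \ref{T1} are consequences of (C1)--(C5).

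First I would verify the operator-theoretic hypotheses. By the property of $R_{\lambda,h}$ and $R_{\lambda,l}$ recalled from \cite{a}, both resolvents are firmly nonexpansive. Firm nonexpansiveness implies nonexpansiveness, which in turn implies quasi-pseudocontractivity and $1$-Lipschitz continuity, so we may take $L=1$ in (A1); the Lipschitz requirement in (A1) is thus met. The nonemptiness $\Gamma_{3}\neq\emptyset$, combined with the characterizations $Fix(R_{\lambda,h})=\{x^{*}:\mathbb{M}(x^{*})=\min_{x\in\mathbb{K}_{1}}\mathbb{M}(x)\}$ and $Fix(R_{\lambda,l})=\{y^{*}:\mathbb{N}(y^{*})=\min_{y\in\mathbb{K}_{2}}\mathbb{N}(y)\}$ recorded in the discussion preceding the theorem, gives $Fix(R_{\lambda,h})\neq\emptyset$ and $Fix(R_{\lambda,l})\neq\emptyset$. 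Hypothesis (A2) is provided by (C2), (A3) is exactly (C3), (A4) matches (C4) verbatim, and (A5) matches (C5); the parameter window $0<\eta<\zeta<\tfrac{1}{1+\sqrt{1+L^{2}}}=\tfrac{1}{1+\sqrt{2}}$ is non-vacuous for $L=1$.

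Second, I would identify the two solution sets. Using the fixed-point characterizations of the resolvents above, the SEFPP solution set $\Gamma$ associated to $\mathbb{T}_{1}=R_{\lambda,h}$ and $\mathbb{T}_{2}=R_{\lambda,l}$ reads
\begin{equation*}
\Gamma=\bigl\{(x^{*},y^{*})\in Fix(R_{\lambda,h})\times Fix(R_{\lambda,l}):\mathbb{D}_{1}x^{*}=\mathbb{D}_{2}y^{*}\bigr\},
\end{equation*}
which coincides with $\Gamma_{3}$ as defined in (\ref{DSVI}). Applying Theorem \ref{T1} then yields $(x_{n},y_{n})\rightharpoonup(x^{*},y^{*})\in\Gamma=\Gamma_{3}$, which is exactly the desired conclusion.

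There is no substantive obstacle in this argument; it is a bookkeeping exercise translating the assumptions (C1)--(C5) into (A1)--(A5) and invoking Theorem \ref{T1}. The only subtle point is the Lipschitz constant: one must observe that firm nonexpansiveness supplies the $1$-Lipschitz bound needed to make (A4) meaningful and to justify the parameter range for $\eta,\zeta$. Once this is noted the conclusion is immediate, so the proof will consist of a few lines of verification followed by a citation of Theorem \ref{T1}, exactly in the style of the preceding corollaries.
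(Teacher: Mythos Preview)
Your proposal is correct and follows exactly the paper's own approach: the paper does not give an explicit proof of Theorem~\ref{Tgh} but states just before it that the result is ``deduced \ldots\ from Theorem~\ref{T1}'' once one knows (from \cite{a}) that $R_{\lambda,h}$ and $R_{\lambda,l}$ are firmly nonexpansive with nonempty fixed-point sets. Your verification that firm nonexpansiveness yields the $1$-Lipschitz and quasi-pseudocontractive conditions of (A1), and that (C2)--(C5) line up with (A2)--(A5) so that $\Gamma=\Gamma_{3}$, is precisely the intended reduction.
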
\newpage
  \section{NUMERICAL EXAMPLES}
  This section gives  numerical examples that illustrated our results. 
  
  \begin{example}In Theorem (\ref{1}), Let $H_{1}=\mathbb R,$    $C,Q\in(0,\infty),$ and let $D_{1}x=\frac{x}{2}$, $D_{2}y=\frac{y}{3},$ then $D_{1}=D_{1}^{*} =\frac{1}{2}$ and $D_{2}=D_{2}^{*}=\frac{1}{3}, respectively$. Define $T:C\to \mathbb R$  and  $S:Q\to \mathbb R$ by 
  	$ Tx  = \frac{x+4}{2},\forall x \in C$ and $ Sy=\frac{y^{2}+2}{y+4}$, for all $y\in Q.$ Clearly, T and S are quasi pseudo demicontractive mappings with Fix(T)=4 and Fix(S)=$\frac{1}{2}$, and (I-T)  and (I-S) are demiclosed at zero.    In Algorithm \ref{1}, let $\eta=\frac{1}{2},$ $\xi=\frac{1}{5},$ $\tau_{n}=\frac{1}{7},$  $\alpha_{n}=\frac{1}{6},$ clearly, these parameters satisfies the hypothesis of Theorem \ref{1}. Setting the number of iteration n=100, by using maple, we obtain the following results: 
  	 	\end{example}

   \begin{table*}[hbt!]
  	\centering
  	\parbox{0.4\textwidth}{
  		\begin{footnotesize}
  			\begin{tabular}{c |c  |c}
  				\hline\\
  				n & $x_{n}$& $y_{n}$\\[0.5ex]
  				\hline\\
  			1 & 1.000000000&  1.000000000\\ 
  			2 & 1.446428571  & 0.8831268924\\
  			3 & 1.826424319  & 0.7916797163\\
  			. &.   &. \\
  			. &.  &. \\
  			73&.	&0.5000000002 \\
  			74 &.	&0.5000000000\\
  			. &.	& .\\
  			98 & 3.999999582  & 0.5000000000  \\
  			99 & 3.999999644	 &0.5000000000\\
  			100 & 3.999999697	& 0.5000000000    \\
  				\hline
  			\end{tabular}
  \caption{Numerical results of Algorithm \ref{1} starting with initial value $x_{1}=1$ and $y_{1}=1.$}
  		\label{tab:table}
  		\end{footnotesize}
		  	
  	}
  	\qquad
  	\begin{minipage}[c]{0.53\textwidth}%
  		\centering
  		\includegraphics[width=1\textwidth]{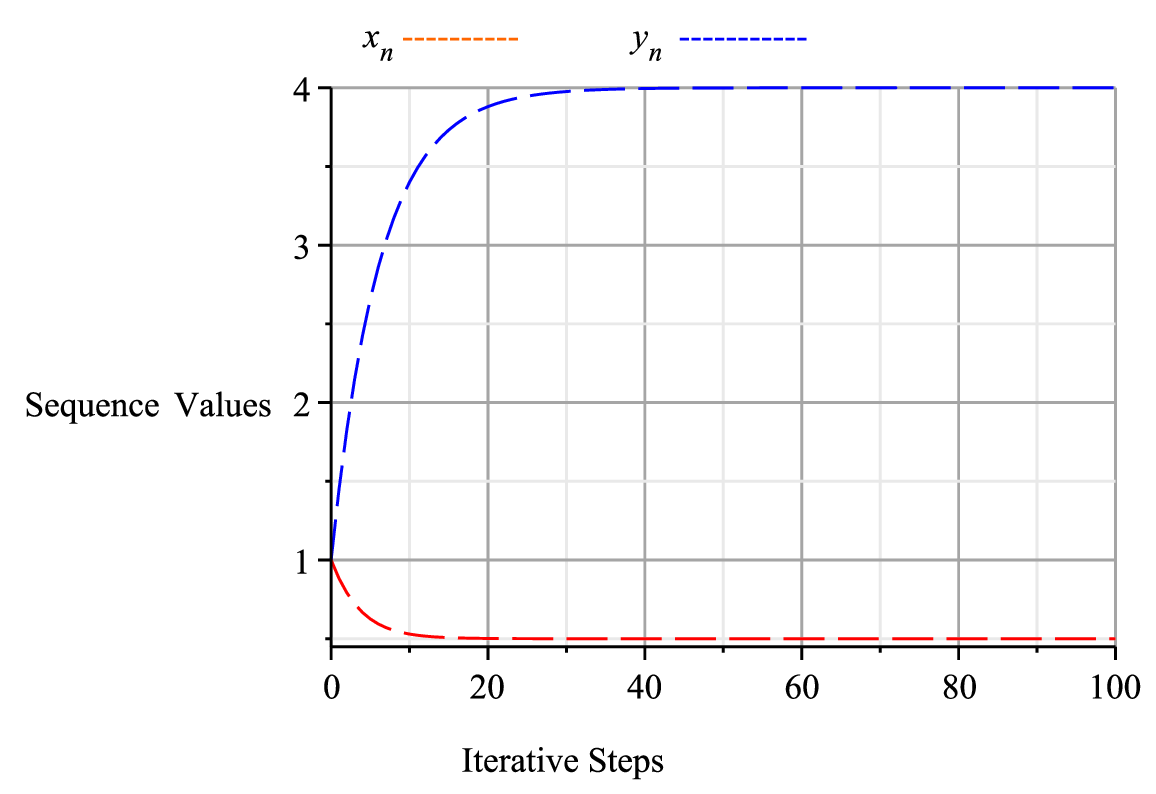}
  	Figure 1:	Graph of Algorithm \ref{1} at 100 iterations  starting with initial value $x_{1}=1$ and $y_{1}=1.$
  		\label{fig:figure}
  	\end{minipage}
  \end{table*}

\begin{example}In Theorem (\ref{1}), Let $H_{1}=\mathbb R,$    $C,Q\in(0,\infty).$ And let $Ax=\frac{x}{2}$, $By=\frac{y}{3},$ then $A=A^{*} =\frac{1}{2}$ and $B=B^{*}=\frac{1}{3}, respectively$. Define $T:C\to \mathbb R$  and  $S:Q\to \mathbb R$ by 
	$ Tx  = \frac{x^{5}+6}{x^{4}+2},\forall x \in C$ and $ Sy=\frac{y^{3}+4}{y^{2}+y}$, for all $y\in Q.$ Clearly, T and S are quasi pseudo demicontractive mappings with Fix(T)=2 and Fix(S)=2, and (I-T)  and (I-S) are demiclosed at zero.    In Algorithm \ref{1}, choose $\eta=\frac{1}{2},$ $\xi=\frac{1}{5},$ $\tau_{n}=\frac{1}{6},$  $\alpha_{n}=\frac{1}{7},$ clearly, these parameters satisfies the hypothesis of Theorem \ref{1}. Setting the number of iteration n=1000, by using maple, we obtain the following results: 
\end{example} 	
 \FloatBarrier
 \begin{table}[h]
 	\centering
 	\parbox{0.4\textwidth}{
 		\begin{footnotesize}
 				\begin{tabular}{c |c  |c}
 				\hline\\
 				n & $x_{n}$& $y_{n}$\\[0.5ex]
 				\hline\\
 				1 & 1.000000000&  1.000000000\\ 
 				2 & 1.402141502  &  1.283490816 \\
 				3 & 1.584779961  & 1.420942750\\
 				.&.   &. \\
 				. &.   &. \\
 				. &.  &. \\
 				80&1.999999998   &. \\
 				81&1.999999999	& . \\
 				. &.  &. \\
 				98 & 1.999999999  & 1.998915702   \\
 				99 &1.999999999 	 &1.998975310 \\
 				100 & 1.999999999	&1.999031634   \\
 				\hline
 			\end{tabular}
 		\end{footnotesize}
 		\caption{Numerical results of Algorithm \ref{1} starting with initial value $x_{1}=1$ and $y_{1}=1.$}
 		\label{tab:table}
 	}
 	\qquad
 	\begin{minipage}[c]{0.53\textwidth}
 		\centering
 		\includegraphics[width=1\textwidth]{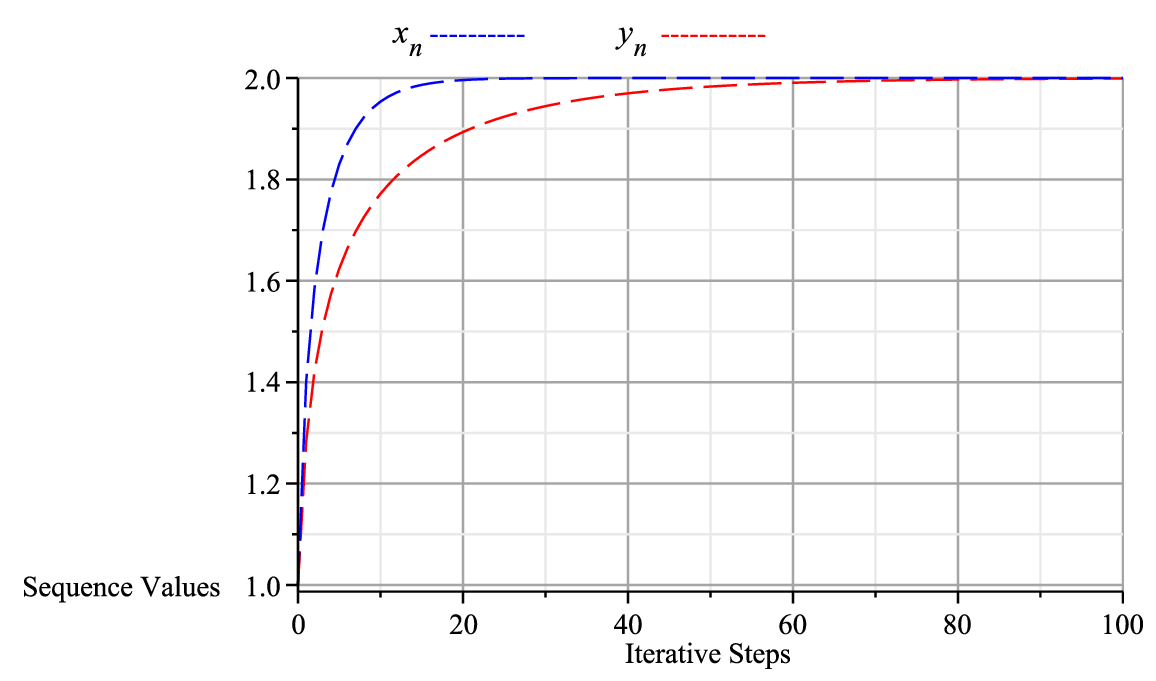}
 		Figure 2: Numerical results of Algorithm \ref{1} starting with initial value $x_{1}=1$ and $y_{1}=1.$
 		\label{fig:figure}
 	\end{minipage}
 \end{table}
 \FloatBarrier

  \section{Conclusion}	
It is generally known that in order to solve the split equality fixed-point problem (SEFPP), it is necessary to compute the norm of bounded and linear operators, which is a challenging task in real life.To address this issue, we studied the SEFPP involving the class of quasi-pseudocontractive mappings in Hilbert spaces and constructed novel algorithms in this regard, and we proved the algorithms' convergence both with and without prior knowledge of the operator norm for bounded and linear mappings. Additionally, we gave applications and numerical examples of our findings. The discoveries highlighted in this work contribute to the generalization of various well-known findings documented in the literature. Furthermore, quasi-pseudocontractive mappings encompass various types, including quasi-nonexpansive, demicontractive, and directed mappings.

The SEFPP explored in our study is highly general. As special examples, it covers a wide range of problems, including split fixed points, split equality, and split feasibility problems. Our findings not only complement and generalize the findings  in Chang et al., \cite{a}, Moudafi \cite{mou,moudafi}  and Chang et al. \cite{2}, but  also offer a cohesive framework for researching further problems pertaining to the SEFPP.

Finally, strong convergence was obtained by imposing the semi-compactness condition. This compactness  condition appears very strong as some mappings are not semi-compact; therefore, new researches can be carried out to prove the strong convergence results without imposing the compactness condition,

 \newpage


\begin{thebibliography}{cite}\nonumber
 \bibitem{1} Censor, Y., Elfving, T.: A multiprojection algorithm using Bregman projections in a product space.
 Numer. Algorithms 8, 221--239 (1994).	
 \bibitem{5} Censor, Y., Bortfeld, T., Martin, B., \& Trofimov, A. (2006). A unified approach for inversion problems in intensity-modulated radiation therapy. Physics in Medicine and Biology, 51(10), 2353--2365.
 \bibitem{a}Chang, S. S., Yao, J. C., Wen, C. F., \& Zhao, L. C. (2020). On the split equality fixed point problem of quasi-pseudo-contractive mappings without a priori knowledge of operator norms with applications. Journal of Optimization Theory and Applications, 185(2), 343-360.
 \bibitem{6}Censor, Y., Elfving, T., Kopf, N., \& Bortfeld, T. (2005). The multiple-sets split feasibility problem and its applications for inverse problems. Inverse Problems, 21(6), 2071--2084.
 \bibitem{7} Censor, Y., Motova, A., \& Segal, A. (2007). Perturbed projections and subgradient projections for the multiple-sets split feasibility problem. Journal of Mathematical Analysis and Applications, 327(2), 1244--1256.
 \bibitem{10}Byrne, C.: Iterative oblique projection onto convex subsets and the split feasibility problem. Inverse
 Probl. 18, 441–453 (2002)	
 \bibitem{moudafi}Moudafi, A., \& Al-Shemas, E. (2013).   Simultaneous iterative methods for split equality problem. Trans. Math. Program. Appl, 1(2), 1-11.	
 \bibitem{mou}Moudafi, A. (2014). Alternating CQ-algorithm for convex feasibility and split fixed-point problems. J. Nonlinear Convex Anal, 15(4), 809-818.
 \bibitem{z} Boikanyo, O. A., \& Zegeye, H. (2020). The split equality fixed point problem for quasi-pseudo-contractive mappings without prior knowledge of norms. Numerical Functional Analysis and Optimization, 41(7), 759-777.
 \bibitem{b}Mohammed, L. B., \& Kılıçman, A. (2015, January). Strong convergence for the split common fixed-point problem for total quasi-asymptotically nonexpansive mappings in Hilbert space. In Abstract and Applied Analysis (Vol. 2015). Hindawi.	\bibitem {LBM}Mohammed, L. B., Kılıçman, A., \& Saje, A. U. (2023). On split equality fixed-point problems. Alexandria Engineering Journal, 66, 43-51.	
 \bibitem{2} Chang, S.-S., Wang, L., Qin, L.J.: Split equality fixed point problem for quasi pseudo-contractive mappings with applications. Fixed point Theory Appl. 2015, 208 (2015)
 \bibitem{3}  Z. Opial , Weak convergence of the sequence of successive approximations for nonexpansive mappings, Bull. Am. Math. Soc. 73 (1967), 591--597.
 \bibitem{21} Xu, H.K., Iterative algorithms for nonlinear  operators. J. Lond. Math. Soc. 66,240–256.
\bibitem {c}Censor, Y., Gibali, A.,\& Reich, S. (2012). Algorithms for the split variational inequality problem. Numerical Algorithms, 59(2), 301-323.
\bibitem{by}Byrne, C.: Iterative oblique projection onto convex subsets and the split feasibility problem. Inverse
Probl. 18, 441–453 (2002).

\bibitem {LI}Che, H., \& Li, M. (2015). A simultaneous iterative method for split equality problems of two finite families of strictly pseudononspreading mappings without prior knowledge of operator norms. Fixed Point Theory and Applications, 2015(1), 1-14.

\end{thebibliography}
\end{document}